\DeclareMathOperator*{\argmin}{arg\,min}                   % arg min
\renewcommand{\phi}{\mathbf{\varphi}}
\newcommand{\bfT}{\mathbf{T}}
\newcommand{\bfA}{\mathbf{A}}
\newcommand{\bfU}{\mathbf{U}}
\newcommand{\bfP}{\mathbf{P}}
\newcommand{\bfR}{\mathbf{R}}
\newcommand{\bfb}{\mathbf{b}}
\newcommand{\bfH}{\mathbf{H}}
\newcommand{\bfX}{\mathbf{X}}
\newcommand{\bfx}{\mathbf{x}}
\newcommand{\bfe}{\mathbf{e}}
\newcommand{\bfy}{\mathbf{y}}
\newcommand{\bfM}{\mathbf{M}}
\newcommand{\bfL}{\mathbf{L}}
\newcommand{\bfW}{\mathbf{W}}
\newcommand{\bfh}{\mathbf{h}}
\newcommand{\bfr}{\mathbf{r}}
\newcommand{\bfV}{\mathbf{V}}
\newcommand{\bfQ}{\mathbf{Q}}
\newcommand{\bfZ}{\mathbf{Z}}
\newcommand{\revision}[1]{\textcolor{black}{#1}}
\newcommand{\revmsl}[1]{\textcolor{black}{#1}}
\renewcommand\st[1]{}
\newtheorem{theorem}{Theorem}[section]
\newtheorem{proposition}[theorem]{Proposition}
\newtheorem{corollary}[theorem]{Corollary}
\title{Iterative Refinement and Flexible Iteratively Reweighed Solvers for Linear Inverse Problems with Sparse Solutions}
\renewcommand\@date{{%
  \vspace{-\baselineskip}%
  \large\centering
  \begin{tabular}{@{}c@{}}
    Lucas Onisk\textsuperscript{1} \\
    \normalsize lonisk@emory.edu
  \end{tabular}%
  \quad and\quad
  \begin{tabular}{@{}c@{}}
    Malena Sabaté Landman\textsuperscript{2} \\
    \normalsize Malena.SabateLandman@maths.ox.ac.uk
  \end{tabular}
  \bigskip

  \textsuperscript{1}\normalsize Department of Mathematics, Emory University, Atlanta, GA, USA\par
  \textsuperscript{2}\normalsize Mathematical Institute, University of Oxford, Oxford, UK

}}
\begin{document}
\maketitle

\begin{abstract}
This paper presents a new algorithmic framework for computing sparse solutions to large-scale linear discrete ill-posed problems. The approach is motivated by recent perspectives on iteratively reweighted norm schemes, viewed through the lens of iterative refinement. This framework leverages the efficiency and fast convergence of flexible Krylov methods while achieving higher accuracy through suitable restarts. Additionally, we demonstrate that the proposed methods outperform other flexible Krylov approaches in memory-limited scenarios. Relevant convergence theory is discussed, and the performance of the proposed algorithms is illustrated through a range of numerical examples, including image deblurring and computed tomography.
\end{abstract}

%%%%%%%%%%%%%%%%%%%%%%%%%%%%%%%%%%%%%%%%%%%%%%%%%%%%%%%%%%%%%%%%%%
%%%%%%%%%%%%%%%%%%%%%%% Section Break %%%%%%%%%%%%%%%%%%%%%%%%%%%%
%%%%%%%%%%%%%%%%%%%%%%%%%%%%%%%%%%%%%%%%%%%%%%%%%%%%%%%%%%%%%%%%%%
\section{Introduction}\label{sec:Intro}

In this paper we consider discrete linear inverse problems of the form:
\begin{equation}\label{eq:original}
    \mathbf{A}\mathbf{x_{\textbf{true}}}+ \boldsymbol{\epsilon}=\mathbf{b},
\end{equation}
where $\mathbf{A}\in\mathbb{R}^{m\times n}$ is a discretization of the forward model, and the right-hand side $\mathbf{b}$ is corrupted by white Gaussian noise $\boldsymbol{\epsilon}$. In many applications, problem \eqref{eq:original} is ill-posed in the sense that the singular values of the matrix $\mathbf{A}$ decay \revision{without significant gap} and cluster at zero, and the properties of the forward model make the solution very sensitive to noise in the measurements. In this case, the least-squares solution of \eqref{eq:original} is not a good approximation of the true solution $\mathbf{x_{\textbf{true}}}$, and we need to resort to the use of regularization. 

Choosing a good regularization method relies on making implicit or explicit assumptions on the solution we want to reconstruct. For example, a common choice of regularization is to consider the following minimization problem,
\begin{equation}\label{eq:l1}
    \min_\mathbf{x} \lVert \mathbf{A} \mathbf{x} - \mathbf{b}\rVert_2^2 + \lambda \lVert \mathbf{L} \mathbf{x}\rVert^p_p,
\end{equation}
which is known to promote sparsity in the reconstructed solutions after a possible linear transformation $\mathbf{L}$ when $ 0 < p \leq 1$. It is also typical to assume that $\mathcal{N}(\bfA) \cap \mathcal{N}{(\bfL)}$ where $\mathcal{N}(\cdot)$ denotes the null space of a matrix, since then the solution of \eqref{eq:l1} is unique for $p=1$. In particular, $\mathbf{L}$ is usually chosen to be a change of basis such as a wavelet transform, see e.g. \cite{belge2000wavelet}, or a discrete approximation of a differential operator: for $p=1$, this leads to a particular approximation of the discrete total variation (TV) regularization. 
\st{Note that, however,} \revision{Note, however, that} considering \eqref{eq:l1} with an $\ell_0$ semi-norm leads to an NP hard optimization problem \cite{Fornasier2011}, and \eqref{eq:l1} is non-convex for $0 < p < 1$, so it is common to only consider $p=1$ in practice, which corresponds to the convex relaxation of \eqref{eq:l1} with an $\ell_0$ semi-norm. 

There exist a vast number of optimization methods that can be used to solve \eqref{eq:l1}. \revmsl{ For example, but not exhaustively, those combining gradient and proximal steps (using acceleration) such as the Fast Iterative Shrinkage-Thresholding Algorithm (FISTA) \cite{beck2009fast}, using split Bregman iterations \cite{goldstein2009split}, augmented Lagrangian methods \cite{AugmentedLagrangian}, or exploiting separable approximations as in the Sparse Reconstruction by Separable Approximation (SpaRSA) \cite{wright2009sparse}.} Other popular approaches\revmsl{, inspired by the numerical linear algebra community,} are based on iteratively reweighted norm (IRN) or iteratively reweighted least-squares (IRLS) schemes, originally \revision{discussed} in \cite{daubechies2010,WohRod}. These are based on constructing a sequence of approximations to a (smoothed) version of the original functional, \revmsl{so that a new sub-problem is} solved at each iteration. \revmsl{Moreover,} these can be used in combination with Krylov subspace solvers\revmsl{, since they show very fast convergence. In this case, however, this leads to a nested loop of iterations, since the solution of each of the sub-problems is approximated using an iterative method.  Here, the outer-problem corresponds to updating the approximation of the functional (by adapting the regularization matrix, $\mathbf{L}$, using the new available approximate solution), and the inner-problem corresponds to finding an approximation of each of the sub-problems using a Krylov method.} 

In this paper, we recast the IRN framework in an iterative refinement scheme, which focuses on optimizing over the update of the solution (rather than the full solution) and correcting the residual at each iteration. \revmsl{Note that this scheme, introduced by Wilkinson in \cite{Wilk48}, was developed to improve the accuracy of the solution to linear systems; a history of IR applied to linear problems may be found in \cite{CarsonHigham18, Higham22}. }However, this is not straightforward in the IRN case and needs to be done carefully to assure the updated systems are equivalent. 

\revmsl{Furthermore, the double looping algorithmic structures of these methods, in what we will term \emph{inner-outer} schemes,  can lead to slow convergence \cite[Chapter 4]{Bjorck}.}
\st{Moreover, these methods are based on inner-outer schemes, which can lead to slow convergence (Bjorck citation).}
For this reason, Krylov subspace variants that  construct adaptive subspaces that can handle the changing weights throughout the iterations, and thus avoiding nested loops, are becoming very popular. These generally consist of two types: generalized Krylov subspaces, e.g. \cite{Lanza2015AGK, lplq, Buccini2024software} to name a few, which are augmented at each iteration using the residual of the normal equations, and flexible Krylov methods \cite{Gazzola2014GAT, JulianneSilvia,Gazzola2019TV, Gazzola2021IRW}, which use iteration-dependent preconditioning. However, these methods can either \revision{prematurely stagnate,} \revmsl{in the sense that the solution updates become arbitrarily small while  the solution is still far from the desired approximation of $\mathbf{x_{\textbf{true}}}$},  or become very memory consuming if the number of iterations is large, \revision{thus necessitating the the ability to restart}. This has been done for generalized Krylov subspaces in \cite{Buccini2023restart}, and including recycling \cite{pasha2023recycling}, but the methodology employed in that paper is not easily applicable to flexible Krylov methods. 
\revision{To allow for flexible methods that employ inherent restarting, we resort to the IRN interpretation as an IR scheme. 
Our method shares similarities to recent work on large linear systems, where the cost of approximating each update step is reduced by
constructing a basis for a (Krylov) subspace of increasing dimension so that each problem in the iterative refinement scheme is projected in a different (nested) subspace \cite{Onisk2022Arnoldi}.} 

In summary, we propose a new \revision{methodology} which combines iteratively reweighted flexible Krylov methods and iterative refinement \revmsl{and present four new specific methods to solve large linear inverse problems with sparse solutions.} These show a comparable convergence to \revmsl{their standard counterparts}, but allow for suitable restarts. This can be convenient to reduce the required memory, and overcome the stagnation that can happen when using iteratively reweighted flexible Krylov methods.

The novelties of this paper are highlighted as follows:
\begin{itemize}
    \item we describe a new perspective on IRN schemes, which is based on iterative refinement,
    \item we propose two new algorithms which combine iteratively reweighted flexible methods and iterative refinement: IR-FGMRES and IR-FLSQR, which allow for seamless restarts. \revision{To the best of our knowledge, this} is the first restarted version of iteratively reweighted flexible methods,
    \item we propose variants of the previous methods which, after each restart, incorporate the most recent approximation to the solution into the search space for the update. These new corrected methods: CIR-FGMRES and CIR-FLSQR, are particularly effective when the restarts happen after a small amount of iterations. 
\end{itemize}

The paper is organized as follows. In Section \ref{Sec-IR-IRN} we give the required background on IRN schemes and flexible Krylov methods. In Section \ref{Sec-IR-IRN} we recast \revision{the IRN framework} \st{IRN frameworks} as a particular case of iterative refinement \revision{and provide necessary background.} \revision{Then, in Section \ref{Sec-new-method}, we propose our algorithms IR-FGMRES and IR-FLSQR as well as their corrected variants CIR-FGMRES and CIR-FLSQR. In Section \ref{sec:theory} we discuss convergence behavior of our proposed methods.}\st{Then, in Section 4, we propose the solvers for linear inverse problems with sparse solutions: IR-FGMRES and IR-FLSQR, as well as corrected variants that incorporate the previous approximation into the solution space after restarts: CIR-FGMRES and CIR-FLSQR.} Finally, some numerical examples are given in Section \ref{sec:numerics} \revision{ that showcase the performance of the new methods. We conclude in Section \ref{sec:conclusion} with some remarks.}

%%%%%%%%%%%%%%%%%%%%%%%%%%%%%%%%%%%%%%%%%%%%%%%%%%%%%%%%%%%%%%%%%%
%%%%%%%%%%%%%%%%%%%%%%% Section Break %%%%%%%%%%%%%%%%%%%%%%%%%%%%
%%%%%%%%%%%%%%%%%%%%%%%%%%%%%%%%%%%%%%%%%%%%%%%%%%%%%%%%%%%%%%%%%%
\section{IRN schemes for $\ell_p$ regularization}\label{Sec-IR-IRN}

In this section, we provide some background for IRN and flexible Krylov methods; as well as a brief discussion on existing literature that is used as a comparison in the numerical experiments in Section~\ref{sec:numerics}.

\revision{We use the following notation throughout:} $[\mathbf{A}]_{i,j}$ is the component of the matrix $\mathbf{A}$ in the $i$th row and $j$th column, $\mathbf{a}_k$ is its $k$th column vector, and $[\mathbf{x}]_i$ is $i$-th component of the vector $\mathbf{x}$.

\subsection{Background on IRN}\label{Sec-IRN}

There are several closely related techniques to solve \eqref{eq:l1} which fall into the category of iterative reweighted (IRW) schemes. These optimization methods follow a majorization-minimization approach, and aim to solve variational regularization problems involving $\ell_p$ norms by constructing a sequence of least-squares sub-problems that need to be solved sequentially. In particular, these are quadratic tangent majorants of the original problem. Recall that a quadratic tangent majorant of a functional at a point $\bar{\mathbf{x}}$ is a quadratic upper bound constructed in such a way that the values of both the functional and the gradient of both functionals coincide at $\bar{\mathbf{x}}$.

The fundamental idea behind IRW schemes can be understood by observing that any $\ell_p$ norm can be written as a non-linear weighted $\ell_2$ norm (where $0 < p \leq 2$):
\begin{equation}\label{eq:l1weights}
\Phi(\mathbf{x}) = \frac{1}{p}\left\|\mathbf{L} \mathbf{x}\right\|^p_p = \frac{1}{p} \left\|W(\mathbf{L} \mathbf{x}) \mathbf{L} \mathbf{x}\right\|_2^2   \quad \text{for} \quad \left[W(\mathbf{L}\mathbf{x})\right]_{ii}=\left([\mathbf{L}\mathbf{x}]_{i}\right)^{\frac{p-2}{2}}, 
\end{equation} 
where the regularization matrix \revision{$\mathbf{L}\in \mathbb{R}^{s \times n}$ is given and $W(\mathbf{L}\mathbf{x})\in \mathbb{R}^{s \times s}$} is a diagonal matrix whose elements are often referred to as weights. This expression can be used to construct quadratic tangent majorants of \eqref{eq:l1weights} at any vector $\bar{\mathbf{x}}$
as 
\begin{equation}\label{majorant}
    \Phi^{\bar{\mathbf{x}}}(\mathbf{x})=\frac{1}{2}\lVert W(\mathbf{L}\bar{\mathbf{x}})\mathbf{L}\mathbf{x}\rVert_2^2 + \left(\frac{1}{p}-\frac{1}{2}\right)\lVert W(\mathbf{L}\bar{\mathbf{x}})\mathbf{L}\bar{\mathbf{x}}\rVert_2^2.
\end{equation} 
\revision{From \eqref{majorant} the following can be shown (see \cite{lplq} for further details):
\begin{itemize}
    \item $\Phi^{\bar{\mathbf{x}}}$ is quadratic,
    \item $\Phi^{\bar{\mathbf{x}}}(\mathbf{L}\bar{\mathbf{x}}) = \Phi(\mathbf{L}\bar{\mathbf{x}})$,
    \item the gradients are equal at $\bar{\mathbf{x}}$ (i.e., $\nabla \Phi^{\bar{\mathbf{x}}}(\bar{\mathbf{x}}) = \nabla \Phi(\bar{\mathbf{x}})$),
    \item $\Phi^{\bar{\mathbf{x}}}(\mathbf{x})$ is an upper bound of $\Phi(\mathbf{x})$ for all $\mathbf{x}$.
\end{itemize}}
\st{It can be easily seen that $\Phi^{\bar{\mathbf{x}}}$ is quadratic, that 
$\Phi^{\bar{\mathbf{x}}}(\mathbf{L}\bar{\mathbf{x}}) = \Phi(\mathbf{L}\bar{\mathbf{x}})$, 
that the gradients are equal at $\bar{\mathbf{x}}$ (i.e., $\nabla \Phi^{\bar{\mathbf{x}}}(\bar{\mathbf{x}}) = \nabla \Phi(\bar{\mathbf{x}})$), and that $\Phi^{\bar{\mathbf{x}}}(\mathbf{x})$ is an upper bound of $\Phi(\mathbf{x})$ for all $\mathbf{x}$ (see, e.g., \cite{lplq}).} 

However, in practice, there is a caveat with the functional \eqref{majorant} that is related to the lack of smoothness of the $\ell_p$ norm at the origin when $0 < p \leq 1$. Numerically, the evaluation of \eqref{majorant} at any vector with a 0 valued component will lead to division by zero in the weights in \eqref{eq:l1weights}. 
In this paper, we follow the popular approach of smoothing the functional near zero, i.e., replacing the weights in \eqref{eq:l1weights} by 
\begin{equation}\label{eq:l1weightssmooth}
 [\widetilde{W}(\bfL \mathbf{x})]_{ii}=([\bfL \mathbf{x}]^2_{i}+\tau^2)^{\frac{p-2}{4}}.
\end{equation} 

By doing so, an approximate solution to problem \eqref{eq:l1} can be found iteratively by building a sequence of quadratic tangent majorants of the smoothed functional, and using the solution of each problem to build the next problem in the sequence. This leads to the following recursion:
\begin{equation}\label{recursion}
\mathbf{x}_{k} = \argmin_\mathbf{x} \lVert \mathbf{A} \mathbf{x} - \mathbf{b}\rVert_2^2+ \lambda_k \lVert \mathbf{W}_k \mathbf{L} \mathbf{x}\rVert_2^2, \quad \mathbf{W}_k = \widetilde{W}(\mathbf{L} \mathbf{x}_{k-1}),
\end{equation}
where we have dropped the terms that do not depend on $\mathbf{x}$ and where the regularization parameter $\lambda_k$ has (i) absorbed possible multiplicative constants and (ii) is allowed to change at each iteration. It can be shown that, \revmsl{for fixed $\lambda_k=\lambda$}, if one fully solves each \revision{subproblem in the sequence}, the computed solution converges to the solution of the original problem, for a proof, see e.g., \cite{daubechies2010}. 

\subsection{Krylov methods for $\ell_p$ regularization}\label{Sec:Krylov_weight}

Traditionally, each subproblem \eqref{recursion} in the recursion is solved with high accuracy: for example, using an iterative method, where a new Krylov subspace is constructed at each outer iteration. These methods can produce very accurate solutions, however, the inner-outer loops of iterations can be very computationally expensive. \revision{More recently, new methods have been developed to alleviate the computational burden of IRW schemes wherein 
the subproblems are only \revmsl{approximately} solved, in the sense that each problem is projected in a subspace of increasing dimension, and therefore only one Krylov subspace is built in the algorithm.} \st{More recently, and to alleviate the computation burden of IRW schemes, new methods have appeared and where the subproblems are only partially solved, in the sense that each problem is projected in a subspace of increasing dimension, and therefore only one Krylov subspace is built in the algorithm.}
These methods differ in the way they build \revision{their subspaces}, either using generalized or flexible subspaces, as cited in Section \ref{sec:Intro}. More on flexible methods will be presented in Section~\ref{Sec:Flexible}.

\subsection{Flexible Krylov methods for $\ell_p$ regularization}\label{Sec:Flexible}

Recently, methods to solve \eqref{recursion} have been developed which avoid the inner-outer scheme of iterations by constructing a single (flexible) Krylov subspace. First, consider the regularization matrix $\mathbf{L}$ in \eqref{recursion} to be invertible. Then, \eqref{recursion} can be re-written using a standard change of variables as 
\begin{equation}\label{recursion_22}
\mathbf{z}_{k} = \argmin_\mathbf{z} \left\|\mathbf{A} (\mathbf{W}_k \mathbf{L})^{-1}\mathbf{z} - \mathbf{b}\right\|_2^2+ \lambda_k \lVert\mathbf{z}\rVert_2^2, \quad \text{where } \mathbf{x}_k = (\mathbf{W}_k \mathbf{L})^{-1}\mathbf{z}_k.
\end{equation}

This equivalent problem is said to be in standard form since the regularization matrix in \eqref{recursion_22} is the identity matrix. If $\mathbf{L}$ is not invertible, for example, in the case of discretized TV with general boundary conditions, one can still write an equivalent problem in standard form involving the $A$-weighted pseudo-inverse \cite{Elden1982pseudoinverse}, but these methods are usually computationally demanding and not in the scope of this paper, see for example \cite{Hansen2007smoothingnorm, Hansen2013Aweighted, Gazzola2021Edge}. Note that the matrix $(\mathbf{W}_k \mathbf{L})^{-1}$ can be interpreted as an iteration-dependent preconditioner. Since the regularization matrix in a variational problem can be associated to prior information on the solution, $(\mathbf{W}_k \mathbf{L})^{-1}$ \revision{may also be called a} prior-conditioner \cite{Calvetti2007priorconditioning}. This can be formalized by using a Bayesian interpretation of the inverse problem \eqref{eq:original}, see, e.g. \cite{Calvetti2018Bayesian}.

A very powerful framework to deal with sequences of problems of the form \eqref{recursion_22} is to update the weights every time a new approximation of the solution becomes available, i.e., at each iteration, and to consider flexible Krylov subspace methods which allow for iteration-dependent preconditioning. In this paper, we focus on FGMRES and FLSQR, which are based on flexible extensions of the Arnoldi and Golub-Kahan bidiagonalization procedures to build basis vectors \revision{that include} flexible preconditioning.

Assume we have iteration-dependent right preconditioning matrices $(\mathbf{W}_k \bfL)^{-1}$. \revision{If break-down of the flexible Arnoldi (FA) algorithm has not occurred, then the relation at each iterate $k$ for $k<n$ for a given $\mathbf{A} \in \mathbb{R}^{n \times n}$ and $\mathbf{b}\in \mathbb{R}^{n}$ is given by} \st{Then, if no break-down of the algorithm has happened, the flexible Arnoldi (FA) method for a given $\mathbf{A} \in \mathbb{R}^{n \times n}$ and $\mathbf{b}\in \mathbb{R}^{n}$, updates the following relation at each iteration $k < n$:}
\begin{equation} \label{eq:flexArnoldi}
    \mathbf{AZ}_k = \mathbf{V}_{k+1}\mathbf{H}_{k+1},
\end{equation}
where the columns of $\mathbf{Z}_k=[{(\mathbf{W}_1 \mathbf{L})^{-1} \mathbf{v}_1,..., (\mathbf{W}_k \mathbf{L}) ^{-1} \mathbf{v}_k}] \in \mathbb{R}^{n \times k}$ span the solution search space\st{for the solution}, $\mathbf{H}_{k+1} \in \mathbb{R}^{ (k+1) \times k}$ is upper Hessenberg, and $\mathbf{V}_{k+1} \in \mathbb{R}^{n \times (k+1)}$ has orthonormal columns \revision{whose first column} corresponds to the normalized right-hand side $\mathbf{b}/\|\mathbf{b}\|_2$. A particular implementation of one step of this process can be observed in Algorithm \ref{flexibleArnoldi_step}. This method was first introduced in \cite{Saad1993FGMRES}, and has been widely used in the context of reweighted schemes \cite{Gazzola2014GAT,JulianneSilvia,Gazzola2019TV,Gazzola2021IRW,Chung_2024}.  

\revision{When $\mathbf{A}$ is non-square and given the same preconditioning matrices,} \st{On the other hand, given the same preconditioning matrices, }the flexible Golub-Kahan (FGK) decomposition with $\mathbf{A} \in \mathbb{R}^{m \times n}$ and $\mathbf{b}\in \mathbb{R}^{m}$ updates the following relations at each iteration  $k < \min \{ m,n\}$:
\begin{equation}\label{eq:partialFGKB}
 \mathbf{AZ}_k = \mathbf{U}_{k+1} \mathbf{M}_{k+1} \quad \text{and} \quad \mathbf{A}^T \mathbf{U}_{k+1}=\mathbf{V}_{k+1} \mathbf{S}_{k+1}.
\end{equation}
Here, the columns of $\mathbf{Z}_k=[{(\mathbf{W}_1 \mathbf{L})^{-1} \mathbf{v}_1,..., (\mathbf{W}_k \mathbf{L})^{-1} \mathbf{v}_k}] \in \mathbb{R}^{n \times k}$ span the solution search space. Moreover, $\mathbf{M}_{k+1} \in \mathbb{R}^{ (k+1) \times k}$ is upper Hessenberg, $\mathbf{U}_{k+1} \in \mathbb{R}^{m \times (k+1)}$ has orthonormal \revision{columns whose first column corresponds to the normalized right-hand side $\mathbf{b}/\|\mathbf{b}\|_2$}, $\mathbf{S}_{k+1} \in \mathbb{R}^{(k+1) \times (k+1)}$ is upper triangular, and $\mathbf{V}_{k+1} \in \mathbb{R}^{n \times (k+1)}$ has orthonormal columns. A particular implementation of one step of FGK can be observed in Algorithm \ref{flexibleGKB_step}. 

\begin{algorithm} 
\caption{[$\mathbf{z}_k$, $\mathbf{h}_{\revmsl{k}}$, $\mathbf{v}_{k+1}$] = step\_FA($\mathbf{A}, \mathbf{V}_{k}, \mathbf{W}_{k} \mathbf{L}$)} \label{flexibleArnoldi_step}
%\nonl 
$\%$ step of the Flexible Arnoldi (FA) algorithm  \\
%\nonl 
\text{\bf{Input:}} $\mathbf{A} \in \mathbb{R}^{n \times n},\, \mathbf{V}_{k} \in \mathbb{R}^{n \times k}, \mathbf{W}_{k} \mathbf{L}$\\
%\nonl 
\text{\bf{Output:}} $\mathbf{z}_{k} \in \mathbb{R}^{n},\, \mathbf{h}_{\revmsl{k}} \in \mathbb{R}^{k+1}, \;\mbox{and}\; \mathbf{v}_{k+1} \in \mathbb{R}^{n}$\\
    $\mathbf{z}_k = (\mathbf{W}_{k} \mathbf{L})^{-1}  \mathbf{v}_{k}$ \\
    $\mathbf{q} = \mathbf{A}\mathbf{z}_k$ \\
    \For{$i = 1,\dots,k$}{
        $[\mathbf{h}_{\revmsl{k}}]_{i} = \mathbf{q}^T\mathbf{v}_i$ and $\mathbf{q} = \mathbf{q} - [\mathbf{h}_{\revmsl{k}}]_{i}\mathbf{v}_i$ \\
    }
    $[\mathbf{h}_{\revmsl{k}}]_{k+1} = \|\mathbf{q}\|_2$ \label{hii}\\
    $\mathbf{v}_{k+1} = \mathbf{q}/[\mathbf{h}_{\revmsl{k}}]_{k+1}$ 
\end{algorithm}

\begin{algorithm} 
\caption{[$\mathbf{z}_k, \mathbf{s}_{k}, \mathbf{v}_{k+1}, \mathbf{m}_{\revmsl{k}}, \mathbf{u}_{k+1}$] = step\_FGK($\mathbf{A}, \mathbf{U}_{k}, \mathbf{V}_{k-1}, \mathbf{W}_{k} \mathbf{L}$)}\label{flexibleGKB_step}
%\nonl 
$\%$ step of the Flexible Golub-Kahan (FGK) algorithm  \\
%\nonl 
\text{\bf{Input:}} $\mathbf{A} \in \mathbb{R}^{m \times n},\,\mathbf{U}_{k} \in \mathbb{R}^{m \times k}, \mathbf{V}_{k-1} \in \mathbb{R}^{n \times (k-1)}\mathbf{W}_{k} \mathbf{L}$ 
\\
%\nonl 
\text{\bf{Output:}} $\mathbf{z}_{k} \in \mathbb{R}^{n},\, \mathbf{s}_{k} \in \mathbb{R}^{k},\, \mathbf{v}_{k} \in \mathbb{R}^{n},\, \mathbf{m}_{\revmsl{k}} \in \mathbb{R}^{k+1}, \;\mbox{and}\; \mathbf{u}_{k+1} \in \mathbb{R}^{m}$\\
$\mathbf{w}= \mathbf{A}^T \mathbf{u}_k$ \\
    \For{$i = 1,\dots,k-1$}{
        $[\mathbf{s}_k]_{i} = \mathbf{w}^T \mathbf{v}_i$ and $\mathbf{w} = \mathbf{w} - [\mathbf{s}_k]_{i} \mathbf{v}_i$ \\
    }
$[\mathbf{s}_k]_{k}=\|\mathbf{w}\|_2$ \label{sii}\\
$\mathbf{v}_{k}=\mathbf{w}/[\mathbf{s}_k]_{k}$\\
$\mathbf{z}_k={(\mathbf{W}_k \mathbf{L})^{-1}}\,\mathbf{v}_k$\\
$\mathbf{q} = \mathbf{Az}_k$ \\
    \For{$i = 1,\dots,k$}{
        $[\mathbf{m}_{\revmsl{k}}]_{i} = \mathbf{q}^T \mathbf{u}_i$ and $\mathbf{q} = \mathbf{q} - [\mathbf{m}_{\revmsl{k}}]_{i} \mathbf{u}_i$ \\
    }
$[\mathbf{m}_{\revmsl{k}}]_{k+1}=\|\mathbf{q}\|_2$ \label{mii} \\ 
$\mathbf{u}_{k+1}=\mathbf{q}/[\mathbf{m}_{k+1}]_{k+1}$ 
\end{algorithm}

Note that both algorithms might break down  at iteration $k$ if, for FA, $[\mathbf{H}]_{k+1,k} = 0$ (line \ref{hii} of Algorithm \ref{flexibleArnoldi_step}), or in FGK,  $[\mathbf{M}]_{k+1,k} = 0$ or $[\mathbf{S}]_{k,k} = 0$ (lines \ref{mii} and \ref{sii} of Algorithm \ref{flexibleGKB_step}). However, there is empirical evidence that this is not a likely scenario, see, e.g. \cite{JulianneSilvia}. 

One can now treat the recursion \label{recursion_2} by projecting each problem into the corresponding (nested) space of increasing dimensions spanned by the columns of $\mathbf{Z}_k$ produced by the FA method in \eqref{eq:flexArnoldi} or the FGK in \eqref{eq:partialFGKB}. In the literature, \revision{there are various ways to treat the regularization term. However, when considering the projected problem, all methods can be summarized by the following general form:}
\begin{equation}\label{projected_general_FKS}
    \bfy_k = \argmin_{\bfy \in \mathbb{R}^{k}} \left\{
    \left\|\bfT_k \bfy - \|\bfb\|_2 \bfe_1\right\|_2^2 +  \lambda_k \|\bfP \bfy\|^2_2
    \right\}; 
    \quad \quad \bfx_k = \bfZ_k \bfy_k.
\end{equation}
The methods can be \revision{sub-classified as follows}: 
\begin{itemize}
    \item \underline{Flexible Krylov methods}: these rely on the regularization induced by the structure of the search spaces, without explicit regularization. At each iteration, they solve the following projected problem:
    \begin{equation}\label{projected_FKS}
    \bfy_k = \argmin_{\bfy \in \mathbb{R}^{k}} \left\{
    \left\|\bfT_k \bfy - \|\bfb\|_2 \bfe_1 \right\|_2^2 \right\}; 
    \quad \quad \bfx_k = \bfZ_k \bfy_k,
\end{equation}
    where $\bfT_k$ depends on the choice of the flexible Krylov basis:  flexible  GMRES (FGMRES) considers $\bfT_k = \bfH_{\revmsl{k+1}}$ as defined in \eqref{eq:flexArnoldi} and flexible LSQR (FLSQR) considers $\bfT_k = \bfM_{\revmsl{k+1}}$ as defined in \eqref{eq:partialFGKB}, in this context, see \cite{JulianneSilvia}. Since these methods do not consider any explicit regularization, they converge to the solution of the least-squares problems associated with the noisy right-hand side\revision{, which, does not typically provide a meaningful solution.} \revision{As such, they need to be terminated at an early iteration.} For fixed preconditioning, i.e., $\mathbf{W}_k=\mathbf{W}$, FGMRES and FLSQR reduce to standard preconditioned GMRES and LSQR, the latter also being equivalent to CG applied to the normal equation with split preconditioning $(\mathbf{W} \mathbf{L})^{-1/2}$. 

    \item \underline{Hybrid flexible Krylov methods}: these follow a project-then-regularize scheme, where Tikhonov regularization is added to the projected problem 
    \cite{JulianneSilvia}. In this case, the following minimization problem is solved at each iteration:
    \begin{equation}\label{projected_HFK}
    \bfy_k = \argmin_{\bfy \in \mathbb{R}^{k}} \left\{
    \left\|\bfT_k \bfy - \|\bfb\|_2 \bfe_1 \right\|_2^2 +  \lambda_k \|\bfy\|^2_2
    \right\}; 
    \quad \quad \bfx_k = \bfZ_k \bfy_k,
\end{equation}
    where $\lambda_k$ can be chosen efficiently at each iteration by using regularization parameter choice criteria on the projected problem. As before, hybrid flexible GMRES (H-FGMRES) considers $\bfT_k = \bfH_{\revmsl{k+1}}$, and hybrid flexible LSQR (H-FLSQR) considers $\bfT_k = \bfM_{\revmsl{k+1}}$. The computational cost of these methods is comparable to that of the standard flexible Krylov methods, while they avoid semi-convergence by incorporating explicit regularization. However, these methods converge to the solution of the original least squares problems with added (standard) Tikhonov regularization instead of the $\ell_p$  regularization term. 
    
    \item \underline{Iteratively Reweighted Flexible methods}: these  consider a regularize-then-project approach, in the sense that both the least squares associated to the fit-to-data term and the regularization term are projected onto the solution space. Therefore, at each iteration, they solve:
    \begin{equation}\label{projected_IRW_FKS}
    \bfy_k = \argmin_{\bfy \in \mathbb{R}^{k}} \left\{
    \left\|\bfT_k \bfy - \|\bfb\|_2 \bfe_1 \right\|_2^2 +  \lambda_k \|\bfW_k \bfL \revmsl{\bfZ_k} \bfy\|^2_2
    \right\}; 
    \quad \quad \bfx_k = \bfZ_k \bfy_k.
\end{equation}
    
    \revision{Here, the regularization matrix arising from the majorization of the $\ell_p$ norm, $\bfW_k \bfL $, is included.}\st{This means, including the original regularization matrix arising from the majorization of the $\ell_p$ norm: $\bfW_k \bfL$.}  In practice, this is done by considering $\mathbf{P}= \bfR_{\revmsl{k}}$ in \eqref{projected_general_FKS}, where $\bfR_{\revmsl{k}}$ is obtained doing a tall and skinny economic QR factorization of the matrix $\bfW_k \bfL \revmsl{\bfZ_k}$. 
    In the same fashion to hybrid methods, the regularization parameter $\lambda_k$ can be chosen efficiently at each iteration using only information from the projected problem. As before, IRW-FGMRES considers $\bfT_k = \bfH_{\revmsl{k+1}}$, and IRW-FLSQR considers $\bfT_k = \bfM_{\revmsl{k+1}}$,  \cite{Gazzola2021IRW}.
    This class of methods converge to the solution of (the smoothed version of) problem \eqref{eq:l1}, at the cost of an extra (efficient) economic QR factorization of a tall and skinny matrix per iteration. 
\end{itemize}

\revision{For completeness, we provide the flexible Arnoldi framework in Algorithm \ref{alg:FA-methods} and the flexible Golub-Kahan in Algorithm \ref{alg:FGK-methods}. We note that in all scenarios that we are building an appropriate space for $\mathbf{x}$ and not for $\mathbf{z}$, since the preconditioning $(\bfW_k \bfL)^{-1}$ is already embedded in the subspace.}
\st{Refer to Algorithm 3, for flexible Arnoldi methods.
Note that in all scenarios, we are building an appropriate space for $\mathbf{x}$ and not for $\mathbf{z}$, since the preconditioning $(\bfW_k \bfL)^{-1}$ is already embedded in the subspace.}

\begin{algorithm}[!ht]
\caption{Flexible Arnoldi based Krylov subspace methods} \label{alg:FA-methods}
\text{\bf{Input:}} $\mathbf{A}$, $\mathbf{b}$, $\mathbf{L}$, $\mathbf{x}_0$, maximum number of iterations $\mathrm{kmax}$ \\
\text{\bf{Output:}} Approximate solution $\mathbf{x}_{\mathrm{kmax}}$ \\
Initialize weights $\mathbf{W}_1= \mathbf{I}$\\
$\mathbf{r}_0 = \mathbf{b} - \mathbf{A} \mathbf{x}_{0}$ \\
Initialize Flexible Arnoldi (FA) $\mathbf{H}_0=[], \, \mathbf{Z}_0=[], \mathbf{V}_1=[\mathbf{r}_0 /\|\mathbf{r}_0\|_2] $\\
\For{$k=1,\dots, \mathrm{kmax}$}{
    Compute [$\mathbf{z}_k$, $\mathbf{h}_{\revmsl{k}}$, $\mathbf{v}_{k+1}$] = step\_FA($\mathbf{A}, \mathbf{V}_{k}, \mathbf{W}_{k} \mathbf{L}$) \Comment{Algorithm \ref{flexibleArnoldi_step}} \label{alg_step_fa1} \\
    Update $\mathbf{Z}_k=[\mathbf{Z}_{k-1}, \mathbf{z}_{k}], \,\mathbf{H}_k=\begin{bmatrix} \mathbf{H}_{k-1} & \mathbf{h}_k\\ \mathbf{0} & \phantom{\,} \end{bmatrix},  \, \mathbf{V}_{k+1}=[\mathbf{V}_{k}, \mathbf{v}_{k+1}] $ \label{alg_step_fa2}\\
    Compute coefficients $\displaystyle \mathbf{y}_k$ according to \eqref{projected_general_FKS}  \Comment{compute update} \label{alg_step_fa}
    \\
    $\mathbf{x}_k = \mathbf{Z}_k \mathbf{y}_k$ 
    \\
    $\mathbf{W}_{k+1}=\widetilde{W}(\mathbf{L} \mathbf{x}_k)$ according to \eqref{eq:l1weightssmooth} \Comment{update weights} \\
    \If{stopping criterion}{$\mathrm{kmax} = k$\\ $\mathrm{break}$}}
\end{algorithm}

\begin{algorithm}[!ht]
\caption{Flexible Golub-Kahan based Krylov subspace methods} \label{alg:FGK-methods}
\text{\bf{Input:}} $\mathbf{A}$, $\mathbf{b}$, $\mathbf{L}$, $\mathbf{x}_0$, maximum number of iterations $\mathrm{kmax}$ \\
\text{\bf{Output:}} Approximate solution $\mathbf{x}_{\mathrm{kmax}}$ \\
Initialize weights $\mathbf{W}_1= \mathbf{I}$\\
$\mathbf{r}_0 = \mathbf{b} - \mathbf{A} \mathbf{x}_{0}$ \\
Initialize Flexible Golub-Kahan (FGK) $\mathbf{Z}_0=[], \,\mathbf{S}_0=[], \,  \mathbf{V}_0=[], \, \mathbf{M}_0=[], \, \mathbf{U}_1=[\mathbf{r}_0 /\|\mathbf{r}_0\|_2] $\\
\For{$k=1,\dots, \mathrm{kmax}$}{
    Compute [$\mathbf{z}_k, \mathbf{s}_{k}, \mathbf{v}_{\revmsl{k}}, \mathbf{m}_{\revmsl{k}}, \mathbf{u}_{k+1}$] = step$\_$FGK($\mathbf{A}, \mathbf{U}_{k}, \mathbf{V}_{k-1}, \mathbf{W}_{k} \mathbf{L}$) \Comment{Algorithm \ref{flexibleGKB_step}} \label{alg_step_fgkb1} \\
    Update $\mathbf{Z}_k=[\mathbf{Z}_{k-1}, \mathbf{z}_{k}],$ \,$\mathbf{S}_k=\begin{bmatrix} \mathbf{S}_{k-1} & \mathbf{s}_k\\ \mathbf{0} & \phantom{\,} \end{bmatrix}$,  \, $\mathbf{V}_{\revmsl{k}}=[\mathbf{V}_{\revmsl{k-1}}, \mathbf{v}_{\revmsl{k}}]$  \\
     Update $\mathbf{U}_{k+1}=[\mathbf{U}_{k}, \mathbf{u}_{k+1}], \,\mathbf{M}_{\revmsl{k}}=\begin{bmatrix} \mathbf{M}_{\revmsl{k-1}} & \mathbf{m}_{\revmsl{k}}\\ \mathbf{0} & \phantom{\,} \end{bmatrix}$ 
     \label{alg_step_fgkb2}\\
    Compute coefficients $\displaystyle \mathbf{y}_k$ according to \eqref{projected_general_FKS}  \Comment{compute update} \label{alg_step_gkb}
    \\
    $\mathbf{x}_k = \mathbf{Z}_k \mathbf{y}_k$ 
    \\
    $\mathbf{W}_{k+1}=\widetilde{W}(\mathbf{L} \mathbf{x}_k)$ according to \eqref{eq:l1weightssmooth} \Comment{update weights} \\
    $\mathbf{r}_k = \mathbf{b} - \mathbf{A} \mathbf{x}_{k-1}$ \\
    \If{stopping criterion}{$\mathrm{kmax} = k$\\ $\mathrm{break}$}
    }
\end{algorithm}

\section{IRN as an iterative refinement scheme}\label{Sec-IRNasIR}

In this section, we recast the iteratively reweighted norm (IRN) framework, detailed in Subsection \ref{Sec-IRN}, as an iterative refinement \revision{scheme}\st{method}, which \revision{we outline in Section \ref{Sec-IR}}.\st{ is further described in Subsection 3.1.} \revision{We clarify that this section does not describe a new algorithm, nor promote any particular implementation, rather, our focus here is to provide a new perspective on IRN schemes which serves as the baseline for the new methods we introduce in the following sections.} \st{Note that this section is not meant to describe a new algorithm, or promote any particular implementation, but rather to give a new perspective on IRN schemes which served as the starting point in the development of the new methods that are detailed in the following sections.}

\subsection{Background on iterative refinement}\label{Sec-IR}
Classically, iterative refinement is an algorithmic scheme designed to improve the accuracy of the solution to linear systems by computing approximate updates \revision{to the solution at each iteration \cite{Wilk48}.} To compute the $k^{th}$ approximate solution, $\mathbf{x}_{k}$, to the available linear system \eqref{eq:original}, the $(k-1)^{th}$ residual $\mathbf{r}_{k-1}=\mathbf{b}-\mathbf{Ax}_{k-1}$ is first computed. Then, the correction equation $\mathbf{Ah}=\mathbf{r}_{k-1}$ is solved, and the current solution is updated by $\mathbf{x}_{k}=\mathbf{x}_{k-1}+\mathbf{h}_{k-1}$. The process may be repeated as necessary until a termination criterion is met.

\subsection{IRN schemes as iterative refinement}
Recall that IRW schemes have an inherent inner-outer \revision{algorithmic structure}, since they require updating the regularization matrix as soon as a new approximation of the solution is available. \revision{As such, one can recast them through the lens of iterative refinement since at each outer iteration the IRN scheme computes the solution of the appropriate recursion equation \eqref{majorant}.} \st{This needs to be done carefully to assure the updated systems are equivalent, in the sense that, at each outer iteration, the IRN scheme is computing the solution of the appropriate recursion equation (majorant).} In the most basic form, this corresponds to Algorithm \ref{alg:IRIRW}. 
In this case, we say that we are doing `warm restarts' in the sense that we compute an update of the solution at each possible (outer) iteration, where the initial guess is $\bfx_{k-1}$ so that $\bfx_{k} = \bfx_{k-1} +\bfh_{k}$ and we optimize to obtain $\bfh_{k}$. This is in contrast to `cold restarts', where a new solution is computed from scratch at each possible (outer) iteration, i.e., by considering the initial guess for the solution to be $\bf0$. 
We observe that, from a practical point of view, solving for the update $\mathbf{h}_k$ (and not the solution $\mathbf{x}_k$) leads to convergence with a smaller amount of inner iterations, so this is very convenient memory-wise.

\begin{algorithm}[!ht]
\caption{Iterative refinement for IRN schemes} \label{alg:IRIRW}
\text{\bf{Input:}} $\mathbf{A}$, $\mathbf{b}$, $\mathbf{x}_0$, maximum number of iterations $\mathrm{kmax}$ %, \mbox{and number of iterations} $k$
\\
\text{\bf{Output:}} Approximate solution $\mathbf{x}_{k}$  \\
%Initialize: \\
Initialize weights $\mathbf{W}_1= \mathbf{I}$\\
\For{$k=1,\dots,\mathrm{kmax}$}{
    $\mathbf{r}_k = \mathbf{b} - \mathbf{A} \mathbf{x}_{k-1}$ \\
    $\displaystyle  \mathbf{h}_k = \argmin_{\mathbf{h}} \left\{ \left\lVert\begin{bmatrix} \mathbf{A} \\ \sqrt{\lambda_k} \mathbf{W}_k \mathbf{L} \end{bmatrix} \mathbf{h} - \begin{bmatrix} \mathbf{r}_k \\ - \sqrt{\lambda_k} \mathbf{W}_k \mathbf{L} \, \mathbf{x}_{k-1} \end{bmatrix}\right\rVert_2 \right\}$ \Comment{compute update} \label{alg_step}
    \\
    $\mathbf{x}_{k}=\mathbf{x}_{k-1} + \mathbf{h}_{k}$ \\
    $\mathbf{W}_k=\widetilde{W}( \mathbf{L} \mathbf{x}_k)$ according to \eqref{eq:l1weightssmooth} \Comment{update weights}\\
    \If{stopping criterion}{$\mathrm{kmax} = k$\\ $\mathrm{break}$}
}
\end{algorithm}

In practice, step \ref{alg_step} of Algorithm \ref{alg:IRIRW} can be implemented using a Krylov method, so that, at the \revmsl{$i^{th}$ }iteration \st{of the Krylov method} for the \revmsl{$k^{th}$} outer iteration, we solve

\begin{eqnarray} 
\mathbf{h}^{(i)}_k &=& \arg \min_{\mathbf{h} \in \mathcal{K}_i=\mathcal{R}(\mathbf{V}_i)}  \Bigg\{ \Bigg\lVert
\underbrace{\begin{bmatrix} \mathbf{A} \\ \sqrt{\lambda_{k}} \mathbf{W}_k \revmsl{\bfL}\end{bmatrix}}_{\widehat{\bfA}_k}
\mathbf{h} - \underbrace{\begin{bmatrix} \mathbf{r}_k \\ - \textcolor{black}{\sqrt{\lambda_{k}}}\mathbf{W}_k \revmsl{\bfL}\, \mathbf{x}_{k-1} \end{bmatrix}}_{\widehat{\bfr}_k}\Bigg\rVert_2 \Bigg\} \nonumber\\ 
&=& \bfV_i \cdot \arg \min_{\mathbf{y} \in \mathbb{R}^i} \left\{ \left\lVert\begin{bmatrix} \mathbf{A} \bfV_i \\ \sqrt{\lambda_{k}} \mathbf{W}_k \revmsl{\bfL} \bfV_i\end{bmatrix} \mathbf{y} - \begin{bmatrix} \mathbf{r}_k \\ - \textcolor{black}{\sqrt{\lambda_{k}}}\mathbf{W}_k \revmsl{\bfL}\, \mathbf{x}_{k-1} \end{bmatrix}\right\rVert_2 \right\},
\end{eqnarray} 
where the columns of $\bfV_i$ correspond to the basis vectors generated by the Golub-Kahan bidiagonalization (GKB) method applied to $\widehat{\bfA}_k$ (note that this is not a square matrix even for square ${\bfA}$) and $\widehat{\bfr}_k$, after $i$ iterations \cite{GolubVanLoan}. \revmsl{To finish, $\bfh_k$ in step \ref{alg_step} of Algorithm \ref{alg:IRIRW} is taken to be $\bfh^{(i)}_k$ for $i$ corresponding to the stopping iteration}. 
Note that in this case, the regularization parameter is defined as a part of the system matrix and right-hand side, so it needs to be set in advance of the iterations. Moreover, another drawback of this method is that computing the update \revmsl{still} corresponds to the inner iterations of an inner-outer scheme, so it can be a computationally expensive task. 

%%%%%%%%%%%%%%%%%%%%%%%%%%%%%%%%%%%%%%%%%%%%%%%%%%%%%%%%%%%%%%%%%%
%%%%%%%%%%%%%%%%%%%%%%% Section Break %%%%%%%%%%%%%%%%%%%%%%%%%%%%
%%%%%%%%%%%%%%%%%%%%%%%%%%%%%%%%%%%%%%%%%%%%%%%%%%%%%%%%%%%%%%%%%%

\section{Combining IRW flexible Krylov methods and iterative refinement}\label{Sec-new-method}
In this section, we propose two new solvers: IR-FGMRES and IR-FLSQR, and two corresponding modified versions \revision{that include a subspace correction:} \st{including a subspace correction:} CIR-FGMRES and CIR-FLSQR. All four methods are based on flexible Krylov subspaces where the iteration-dependent preconditioning is related to the IRN weights that exploit the structure of iterative refinement schemes. In particular, these methods are very efficient since they avoid nested loops of iterations and allow for the natural implementation of (i) (warm) restarting - which can overcome stagnation and also reduce memory requirements - and (ii) automatic parameter selection.

\subsection{Iteratively Refined and (restarted) flexible Krylov methods}
Recent work on linear system solvers using Krylov projection methods within an iterative refinement scheme considered the construction of a (single) subspace which \revmsl{is} augmented for every correction step \cite{Onisk2022Arnoldi}. \revision{This approach is advantageous since it can avoid having a nested loop of iterations.} \st{This is very efficient compared to the previous methods, since it avoids having a nested loop of iterations.} In this work, we adopt this idea in the context of IRN. However, differently from \cite{Onisk2022Arnoldi}, we consider a flexible Krylov subspace that can handle the changes in the regularization matrix. Moreover, and differently from other flexible Krylov methods for IRN such as \cite{Gazzola2021IRW, JulianneSilvia}, our new methods naturally allow for (warm) restarts. This is crucial to overcome two main problems associated to the latter methods: we can reduce the memory requirements and overcome stagnation of the solution; while maintaining their strengths: efficiency and automatic choice of the regularization parameters throughout the iterations. 

A general description for both the new Iterative Refinement Flexible GMRES (IR-FGMRES) and LSQR (IR-FLSQR), can be found in Algorithm \ref{alg:FIRKS}, where we use the flexible Arnoldi and flexible Golub-Kahan decompositions described in Section \ref{Sec:Flexible}.
\begin{algorithm}[!ht]
\caption{Iterative Refinement (restarted) Flexible Krylov subspace methods} \label{alg:FIRKS}
\text{\bf{Input:}} $\mathbf{A}$, $\mathbf{b}$, $\mathbf{x}_0$, maximum number of iterations $\mathrm{kmax}$ \\
\text{\bf{Output:}} Approximate solution $\mathbf{x}_{\mathrm{kmax}}$ \\
Initialize weights $\mathbf{W}_1= \mathbf{I}$\\
$\mathbf{r}_0 = \mathbf{b} - \mathbf{A} \mathbf{x}_{0}$ \\
Set $\mathbf{V}_1 = \mathbf{r}_0 / \|\mathbf{r}_0 \|_2$ (Arnoldi) or $\mathbf{U}_1 = \mathbf{r}_0 / \|\mathbf{r}_0 \|_2$ (Golub-Kahan) \\ 
$i = 1$\\
\For{$k=1,\dots, \mathrm{kmax}$}{
    \If{inner (restart) stopping criterion}{
    Set $\mathbf{V}_1 = \mathbf{r}_k / \|\mathbf{r}_k \|_2$ (Arnoldi) or $\mathbf{U}_1 =  \mathbf{r}_k/ \|\mathbf{r}_k \|_2$ (Golub-Kahan) \label{addZ_FIRKS}\\ $i = 1$\\
    }
    Update $\mathbf{Z}_i$ (and other relevant matrices) following lines \ref{alg_step_fa1}-\ref{alg_step_fa2} of Algorithm \ref{alg:FA-methods} (Arnoldi) or lines \ref{alg_step_fgkb1}-\ref{alg_step_fgkb2} of Algorithm  \ref{alg:FGK-methods} (Golub-Kahan)
     \\
    $\displaystyle \mathbf{y}_i = \argmin_{\mathbf{y \in \mathbb{R}^{i}}} \left\{ \left\lVert\begin{bmatrix} \mathbf{A} \mathbf{Z}_i \\ \sqrt{\lambda_i} \mathbf{W}_k \mathbf{L} \mathbf{Z}_i \end{bmatrix} \mathbf{y} - \begin{bmatrix} \mathbf{r}_k \\ - \sqrt{\lambda_i} \mathbf{W}_k \mathbf{L} \, \mathbf{x}_{k-1} \end{bmatrix}\right\rVert_2 \right\}$ \Comment{compute update} \label{alg_step_FIRKS}
    \\
    $\mathbf{x}_k = \mathbf{x}_{k-1} +\mathbf{Z}_i \mathbf{y}_i$ 
    \\
    $\mathbf{W}_k=\widetilde{W}(\bfL \mathbf{x}_k)$ according to \eqref{eq:l1weightssmooth} \Comment{update weights} \\
    $\mathbf{r}_{k+1} = \mathbf{b} - \mathbf{A} \mathbf{x}_{k}$ \\
    \If{outer stopping criterion}{$\mathrm{kmax} = k$\\ $\mathrm{break}$}
    $i = i+1$
}
\end{algorithm}
Note that line \ref{alg_step_FIRKS} of Algorithm \ref{alg:FIRKS} can be computed efficiently by using the following projection:
\begin{eqnarray} 
\displaystyle \revmsl{\bfy_i} &=& \argmin_{\mathbf{y} \in \mathbb{R}^{\revmsl{i}}} \left\{ \left\lVert \mathbf{A} \bfZ_{\revmsl{i}}  \mathbf{y} - \mathbf{r}_k \right\rVert_2^2  + \lambda_{\revmsl{i}} \|  \mathbf{W}_k \mathbf{L} \revmsl{\bfZ_i} \mathbf{y} - \mathbf{W}_k \mathbf{L} \, \mathbf{x}_{k-1} \|_2^2\right\} \\
&=&  \argmin_{\mathbf{y} \in \mathbb{R}^{\revmsl{i}}} \left\{ \left\lVert \bfT_k  \mathbf{y} - \bfX_k^T\bfr_k \right\rVert_2^2  + \lambda_{\revmsl{i}} \|  \bfR_k \mathbf{y} - \bfQ_k^T \revmsl{\hat{\bfx}}_{k-1} \|_2^2\right\},
\label{eq:update}
\end{eqnarray} 
\revmsl{where $\hat{\bfx}_{k-1} = \bfW_k \bfL \bfx_{k-1}$.} Here, we have projected the first term using either the Arnoldi relation \eqref{eq:flexArnoldi}, so that  $\bfT_k = \bfH_{\revmsl{k+1}}$ and $\bfX_k=\bfV_k$, or the Golub-Kahan relation \eqref{eq:partialFGKB}, so that $\bfT_k = \bfM_{\revmsl{k+1}}$ and $\bfX_k=\bfU_k$; and an economic QR factorization of the tall and skinny matrix $\bfQ_k \bfR_k = \mathbf{W}_k \mathbf{L} {\revmsl{\bfZ_i}} $, which can be done efficiently since the number of iterations $i$ before a restart is typically small. Moreover, since \revmsl{the} the problem \eqref{eq:update} is \revmsl{also} small, and the projection does not depend on the choice of the regularization parameter, one can seamlessly use \revision{standard regularization parameter selection methods} \st{standard regularization parameter choice} to compute a suitable $\lambda_{k}$, and this parameter can change at each iteration. We provide a short discussion on regularization parameter \revision{schemes}\st{choices}, including criteria to restart the Krylov subspaces, in Section \ref{sec:reg_parag}.

%%%%%%%%%%%%%%%%%%%%%%%%%%%%%%%%%%%%%%%%%%%%%%%%%%%%%%%%%%%%%%%%%%
%%%%%%%%%%%%%%%%%%%%%%% Section Break %%%%%%%%%%%%%%%%%%%%%%%%%%%%
%%%%%%%%%%%%%%%%%%%%%%%%%%%%%%%%%%%%%%%%%%%%%%%%%%%%%%%%%%%%%%%%%%

\subsection{Corrected iteratively refined flexible IRN}

One of the key properties of our \revision{proposed methods is that the solution subspaces that are built are a good fit for the measurements. Doing so leverages the properties of Krylov methods and requires the incorporation of prior information on the solution through the priorconditioning of the basis vectors. If, however, a solution at iteration $k$ when a restart is necessary is not yet a good approximation, a method might stagnate.} A simple way of avoiding this behavior is to augment the corresponding flexible Krylov subspace with the last approximate solution computed before the restart. 

\st{There are different ways in which one can incorporate a given direction in the solution space; sometimes with some overlap in the naming conventions.} 
Subspace augmentation (or enrichment) considers \revision{the explicit addition of a set of vectors to a Krylov subspace to `augment' the solution subspace, see e.g.,} \st{adding explicitly a given set of external vectors to the Krylov subspace to `augment' the solution subspace, see e.g.} \cite{Chapman1997Augmented,Baglama2007AugmentedGMRES,Hansen2019AugmentedLSQR, BDOR25}. In general, these vectors can come from prior knowledge of the solution, solutions of closely related systems, or previous iterations before a restart; and they can either enforce the Krylov subspace to be orthogonal to the augmented space or not. Moreover, in subspace recycling, the Krylov subspace iterations keep orthogonality to the image of the recycled space, so that the Krylov space complements the recycled space. This allows for the choice of optimal recycled spaces, but it requires additional computational cost, see the review \cite{Soodhalter2020recycling}. 

In this paper, we follow an augmentation-type approach where the Krylov subspace is kept orthogonal to the added vector; but it is built from the system matrix and the residual without keeping orthogonality to the image of the added vector (and therefore it is not considered a recycling scheme). In particular, after each restart, and given the previous solution $\mathbf{x}_{k-1}$, we consider $\bar{\mathbf{z}}_1 = \mathbf{x}_{k-1}/\|\mathbf{x}_{k-1}\|$, and we use the (normalized version of the) vector $\left(\mathbf{I}-\bar{\mathbf{z}}_1\bar{\mathbf{z}}_1^T\right) \mathbf{r}_k$ to initialize the Krylov subspace. The details of this method can be found in Algorithm \ref{alg:CFIRKS_A} for an Arnoldi type method, assuming that the system matrix is square. Note that, in the first iterations before any restart, we update a flexible Arnoldi relation of type \eqref{eq:flexArnoldi}; but, after the first restart we obtain an augmented flexible Arnoldi relationship of the form:
\begin{equation} \label{eq:augmentedflexArnoldi}
    \mathbf{A}\bar{\mathbf{Z}}_k = \bar{\mathbf{V}}_{k+1}\bar{\mathbf{H}}_{k+1},
\end{equation}
where $\bar{\mathbf{z}}_1 = \mathbf{x}_{k-1}/\|\mathbf{x}_{k-1}\|$, $\bar{\mathbf{V}}_{k+1}$ has orthonormal columns, and $\bar{\mathbf{H}}_{k+1}$ is upper Hessenberg but has the particularity that the element in the second row and first column is 0. Once we have the flexible Arnoldi relation \eqref{eq:flexArnoldi} or a corrected flexible Arnoldi relation \eqref{eq:augmentedflexArnoldi} we can find an approximate correction to the solution by projecting the corrected problem associated to the relevant iteration of the IR scheme applied to the IRN problem. This corresponds to line \ref{alg_step_CFIRKS_A} of Algorithm \ref{alg:CFIRKS_A}; which can be done efficiently by generalizing \st{generalising} the projection \eqref{eq:update} to the relevant \revmsl{corrected} subspaces.

Analogously, an augmented flexible Golub-Kahan type method is detailed in Algorithm \ref{alg:CFIRKS_GKB}, so that, after at least one restart, the following augmented flexible Golub-Kahan relationship is updated at each iteration:
\begin{equation}\label{eq:augmentedflexGKB}
 \mathbf{A}\bar{\mathbf{Z}}_k = \bar{\mathbf{U}}_{k+1} \bar{\mathbf{M}}_{k+1} \quad \text{and} \quad \mathbf{A}^T \bar{\mathbf{U}}_{k+1}=\bar{\mathbf{V}}_{k+1} \bar{\mathbf{S}}_{k+1}.
\end{equation}
Here, the columns of $\bar{\mathbf{Z}}_k=\left[ \mathbf{x}_{k-1}/\|\mathbf{x}_{k-1}\|, {(\mathbf{W}_1 \mathbf{L})^{-1} \bar{\mathbf{v}}_1,..., (\mathbf{W}_k \mathbf{L})^{-1} \bar{\mathbf{v}}_k}\right] \in \mathbb{R}^{n \times k}$ span the solution search space. Moreover, $\bar{\mathbf{M}}_{k+1} \in \mathbb{R}^{ (k+1) \times k}$ is upper Hessenberg, $\bar{\mathbf{U}}_{k+1} \in \mathbb{R}^{m \times (k+1)}$ has orthonormal columns \revision{whose first column corresponds to the normalized} $\left(\mathbf{I}-\bar{\mathbf{z}}_1\bar{\mathbf{z}}_1^T\right) \mathbf{r}_k$, $\bar{\mathbf{S}}_{k+1} \in \mathbb{R}^{(k+1) \times (k+1)}$ is upper triangular, and $\bar{\mathbf{V}}_{k+1} \in \mathbb{R}^{n \times (k+1)}$ has orthonormal columns. Additionally, analogously to CIR-FGMRES, once we have a flexible Golub-Kahan relation \eqref{eq:partialFGKB} or a corrected flexible Golub-Kahan relation \eqref{eq:augmentedflexGKB}, we can find an approximate correction to the solution by projecting the corrected problem associated to the relevant iteration of the IR scheme applied to the IRN problem. This corresponds to line \ref{alg_step_CFIRKS_GKB} of Algorithm \ref{alg:CFIRKS_GKB} which can be done efficiently.

\begin{algorithm}[!ht]
\caption{Corrected Iteratively Refined Flexible GMRES (CIR-FGMRES)}\label{alg:CFIRKS_A}
\text{\bf{Input:}} $\mathbf{A}$, $\mathbf{b}$, $\mathbf{x}_0$, maximum number of iterations $\mathrm{kmax}$ \\
\text{\bf{Output:}} Approximate solution $\mathbf{x}_{\mathrm{kmax}}$ \\
Initialize weights $\mathbf{W}_1= \mathbf{I}$\\
$\mathbf{r}_0 = \mathbf{b} - \mathbf{A} \mathbf{x}_{0}$ \\
$\bar{\mathbf{V}}_1 =\mathbf{r}_0 / \|\mathbf{r}_0 \|$, $\bar{\mathbf{Z}}_{0} = []$ and  $\bar{\mathbf{H}}_{0} =[];$  \\
i = 1\\
\For{$k=1,\dots, \mathrm{kmax}$}{
    \If{inner (restart) stopping criterion}{
        $\bar{\mathbf{Z}}_1 =\bfx_{k-1} / \|\bfx_{k-1}\|_2$ \label{initialZ_CIR-FGMRES} \\
        $\mathbf{q} = \mathbf{A}\bar{\mathbf{Z}}_1 $\\
        $[\bar{\mathbf{H}}_1]_{1,1}= \|\mathbf{q} \|_2$ and $\bar{\mathbf{v}}_1=\mathbf{q}/[\bar{\mathbf{H}}_1]_{1,1}$ \\
        $\bar{\mathbf{V}}_2=[\bar{\mathbf{v}}_1,(\mathbf{I}-\bar{\mathbf{v}}_1 \bar{\mathbf{v}}_1^T) \mathbf{r}_k/\|(\mathbf{I}-\bar{\mathbf{v}}_1 \bar{\mathbf{v}}_1^T) \mathbf{r}_k\|_2]$ \\
        $i = 2$ \\
    }
    Compute [$\bar{\mathbf{z}}_i$, $\bar{\mathbf{h}}_{i}$, $\bar{\mathbf{v}}_{i+1}$] = step\_FA($\mathbf{A}, \bar{\mathbf{V}}_{i}, \mathbf{W}_{k} \mathbf{L}$) \Comment{Algorithm \ref{flexibleArnoldi_step}} \\
    Update $\bar{\mathbf{Z}}_i =[\bar{\mathbf{Z}}_{i-1},\bar{\mathbf{z}}_{i},], \, \bar{\mathbf{H}}_i=\begin{bmatrix} \bar{\mathbf{H}}_{i-1} & \bar{\mathbf{h}}_{i}\\ \mathbf{0} & \phantom{\,} \end{bmatrix},$ and $\bar{\mathbf{V}}_{i+1} =[\bar{\mathbf{V}}_{i}, \bar{\mathbf{v}}_{i+1}]$. \\
    $\displaystyle \mathbf{y}_i = \argmin_{\mathbf{y \in \mathbb{R}^{i}}} \left\{ \left\lVert\begin{bmatrix} \mathbf{A} \mathbf{Z}_i \\ \sqrt{\lambda_i} \mathbf{W}_k \mathbf{L} \mathbf{Z}_i \end{bmatrix} \mathbf{y} - \begin{bmatrix} \mathbf{r}_k \\ - \sqrt{\lambda_i} \mathbf{W}_k \mathbf{L} \, \mathbf{x}_{k-1} \end{bmatrix}\right\rVert_2 \right\}$ \Comment{compute update} \label{alg_step_CFIRKS_A} \\
    $\mathbf{x}_k = \mathbf{x}_{k-1} +\mathbf{Z}_i \mathbf{y}_i$ \\
    $\mathbf{W}_{k+1}=\widetilde{W}(\bfL\mathbf{x}_k)$ according to \eqref{eq:l1weightssmooth} \Comment{update weights} \\
    $\mathbf{r}_{k+1} = \mathbf{b} - \mathbf{A} \mathbf{x}_{k}$ \\
    \If{outer stopping criterion}{
        $\mathrm{kmax} = k$ \\ 
        $\mathrm{break}$
    }
    $i = i+1$
    }
\end{algorithm}

\begin{algorithm}[!ht]
\caption{Corrected Iteratively Refined Flexible LSQR (CIR-FLSQR)}\label{alg:CFIRKS_GKB}
\text{\bf{Input:}} $\mathbf{A}$, $\mathbf{b}$, $\mathbf{x}_0$, maximum number of iterations $\mathrm{kmax}$ \\
\text{\bf{Output:}} Approximate solution $\mathbf{x}_{\mathrm{kmax}}$ \\
Initialize weights $\mathbf{W}_1= \mathbf{I}$\\
$\mathbf{r}_0 = \mathbf{b} - \mathbf{A} \mathbf{x}_{0}$ \\
$\bar{\mathbf{U}}_1 =\mathbf{r}_0 / \|\mathbf{r}_0 \|$, $\bar{\mathbf{Z}}_{0} = []$ and  $\bar{\mathbf{H}}_{0} =[];$  \\
i = 1\\
\For{$k=1,\dots, \mathrm{kmax}$}{
    \If{inner (restart) stopping criterion}{
        $\bar{\mathbf{Z}}_1 =\bfx_{k-1} / \|\bfx_{k-1}\|_2$ \label{initialZ_CFIRKS} \\
        $\mathbf{q} = \mathbf{A}\bar{\mathbf{Z}}_1 $\\
        $[\bar{\mathbf{H}}_1]_{1,1}= \|\mathbf{q} \|_2$ and $\bar{\mathbf{u}}_1=\mathbf{q}/[\bar{\mathbf{H}}_1]_{1,1}$ \\
        $\bar{\mathbf{U}}_2=[\bar{\mathbf{u}}_1,(\mathbf{I}-\bar{\mathbf{u}}_1 \bar{\mathbf{u}}_1^T) \mathbf{r}_k/\|(\mathbf{I}-\bar{\mathbf{u}}_1 \bar{\mathbf{u}}_1^T) \mathbf{r}_k\|_2]$ \\
        $i = 2$ \\
    }
    Compute [$\mathbf{z}_i, \mathbf{s}_{i}, \mathbf{v}_{\revmsl{i}}, \mathbf{m}_{\revmsl{i}}, \mathbf{u}_{i+1}$] = step\_FGK($\mathbf{A}, \bar{\mathbf{U}}_{i}, \bar{\mathbf{V}}_{i-1}, \mathbf{W}_{k} \mathbf{L}$) \Comment{Algorithm \ref{flexibleGKB_step}}  \\
    Update $\mathbf{Z}_i=[\mathbf{Z}_{i-1}, \mathbf{z}_{i}],$ \,$\mathbf{S}_i=\begin{bmatrix} \mathbf{S}_{i-1} & \mathbf{s}_{i}\\ \mathbf{0} & \phantom{\,} \end{bmatrix},$  \, $\mathbf{V}_{\revmsl{i}}=[\mathbf{V}_{\revmsl{i-1}}, \mathbf{v}_{\revmsl{i}}]$  \\
     Update $\mathbf{U}_{i+1}=[\mathbf{U}_{i}, \mathbf{u}_{i+1}], \,\mathbf{M}_{\revmsl{i}}=\begin{bmatrix} \mathbf{M}_{\revmsl{i-1}} & \mathbf{m}_{\revmsl{i}}\\ \mathbf{0} & \phantom{\,} \end{bmatrix}$ \\
    $\displaystyle \mathbf{y}_i = \argmin_{\mathbf{y \in \mathbb{R}^{i}}} \left\{ \left\lVert\begin{bmatrix} \mathbf{A} \mathbf{Z}_i \\ \sqrt{\lambda_i} \mathbf{W}_k \mathbf{L} \mathbf{Z}_i \end{bmatrix} \mathbf{y} - \begin{bmatrix} \mathbf{r}_k \\ - \sqrt{\lambda_i} \mathbf{W}_k \mathbf{L} \, \mathbf{x}_{k-1} \end{bmatrix}\right\rVert_2 \right\}$ \Comment{compute update} \label{alg_step_CFIRKS_GKB} \\
    $\mathbf{x}_k = \mathbf{x}_{k-1} +\mathbf{Z}_i \mathbf{y}_i$ \\
    $\mathbf{W}_{k+1}=\widetilde{W}(\bfL\mathbf{x}_k)$ according to \eqref{eq:l1weightssmooth} \Comment{update weights} \\
    $\mathbf{r}_{k+1} = \mathbf{b} - \mathbf{A} \mathbf{x}_{k}$ \\
    \If{outer stopping criterion}{
        $\mathrm{kmax} = k$ \\ 
        $\mathrm{break}$
    }
    $i = i+1$
    }
\end{algorithm}

\subsection{Criteria for the choice of regularization parameters}\label{sec:reg_parag}

Many algorithms presented \revision{or referenced in this work} require the selection of a regularization parameter(s). \revision{Herein, we utilize the discrepancy principle to determine the regularization parameters under the assumption that a good estimate for the noise level ($\mathrm{nl}$) corrupting the linear system is available. This is defined as} \st{In this paper, we opt for using the discrepancy principle. Assuming that a good estimate for the noise level ($\mathrm{nl}$) is available, which defined as}  
\begin{equation}
\mathrm{nl} = \frac{\|\bfe\|_2}{\|\bfb - \bfe\|_2}
\end{equation}
where we select $\lambda_k$ such that
\begin{equation}\label{DP}
 \revmsl{\|\bfr_{k+1}(\lambda_k)\|_2 / \|\bfb \|_2 =}\|\bfA \bfx_{\revmsl{k}}(\lambda_k)-\bfb\|_2 / \|\bfb \|_2 = \mathrm{nl}\cdot\tau
\end{equation}
where $\tau$ is a safety parameter close to 1 that \revision{is used to avoid over-regularization when $\tau > 1$.} In the numerical experiments, we consider $\tau=1$. \revision{We clarify that $\bfx_k(\lambda_k)$ is defined to be the $k^{th}$ solution as a function of the regularization parameter which can change at each iteration.} Moreover, for any projected problem, we can write  $\bfx_k(\lambda_k) = \bfZ_k \revmsl{\bfy}_k(\lambda_k) + \bfx_{k-1}$; and 
$$\revmsl{\bfr_{k+1}}(\lambda_k)=\bfA \bfx_k(\lambda_k)-\bfb = \bfA \bfZ_k \revmsl{\bfy}_k(\lambda_k) + \bfA\bfx_{k-1}-\bfb= \bfA \bfZ_k \revmsl{\bfy}_k(\lambda_k) - \bfr_k;$$
so that the residual norm in equation \eqref{DP} can be computed efficiently:
$$\|\bfr_{\revmsl{k+1}}(\lambda_k)\|_2= \| \bfA \bfZ_k \revmsl{\bfy}_k(\lambda_k) - \bfr_k\|_2 = \| \bfT_k\revmsl{\bfy}_k(\lambda_k) - \bfX_k^T\bfr_k\|_2.$$
Notation-wise, $\bfT_k = \bfH_{\revmsl{k+1}}$ and $\bfX_k=\bfV_k$ if we are using IR-FGMRES; which involves flexible Arnoldi, or the barred counterparts $\bar{\bfH}_k$ and $\bar{\bfV}_k$ for CIR-FGMRES. \revmsl{We note, with a small abuse of notation, that in the latter case we would also have} \st{(note that with a small notation abuse, in this case we would also have} $\bar{\bfZ}_k$ instead of $\bfZ_k$. \revision{On the other hand, we have $\bfT_k = \bfM_{\revmsl{k+1}}$ and $\bfX_k=\bfU_k$ if we are using IR-FLSQR, the flexible Golub-Kahan relation \eqref{eq:partialFGKB}, or the bared counterparts if we are using CIR-FLSQR \eqref{eq:augmentedflexGKB}.}

\revision{We note that if a solution for \eqref{DP} does not exist, then there is not a solution in the constructed search space with a residual norm smaller than the noise level; so we take $\lambda_k=0$, which will minimize the difference between the right and left-hand sides of equation \eqref{DP}.}

In our numerical examples in Section \ref{sec:numerics}, we also show results for the optimal regularization parameter, which can be computed by
\begin{equation}\label{opt}
 \lambda_k=\arg\min_\lambda \| \bfx_{\revmsl{k}}(\lambda_k)-\mathbf{x_{\textbf{true}}}\|_2 
\end{equation}
Of course, this is not a valid parameter choice in practice since it requires the knowledge of the true solution $\mathbf{x_{\textbf{true}}}$; however, this serves as a good indicator of how well the method can perform and gives a way to assess the quality of other regularization parameter choices.
 
We are also interested in using automatic criteria to restart the search spaces. The choice of the stopping criteria (to trigger a restart) is inherently linked to the choice of regularization parameter. For example, note that since we are using the discrepancy principle to choose the regularization parameter, the residual norm will always take the value of the noise level (or occasionally a higher value if this is not attainable), so the discrepancy principle cannot be used to stop the iterations. In this paper, we choose when to restart the subspaces given the stabilization of the regularization parameter. That means, we restart the iterations when
\begin{equation}\label{stop_crit}
    \frac{|\lambda_{k}-\lambda_{k-1}|}{\lambda_{k-1}} \leq \mathrm{tol} \quad \text{and} \quad 
    \frac{|\lambda_{k-1}-\lambda_{k-2}|}{\lambda_{k-2}} \leq \mathrm{tol};
\end{equation}
for a user given tolerance $\mathrm{tol}$. Note that since $\lambda_{k}$ is not a monotonic quantity, we use two consecutive differences to assess stabilization. This is consistent with the IRTools toolbox codes \cite{IRtools} which are used as a comparison to evaluate the performance of the newly proposed methods.

\revision{We highlight that since we are dealing with projection methods that often require a relatively small subspace dimension to find a meaningful solution, other parameter choice strategies can also be used, see, e.g.,} \st{Note that; since we are dealing with projection methods; the projected problem at each iteration has a very small dimension; so other parameter choices can also be used, see, e.g.} \cite{Chung2024survey, Gazzola2020Parameters}. However, for some parameter choice methods such as the generalized cross-validation (GCV), some care needs to be taken since the regularization term has a right-hand side.  
Additionally, other options for the combination of regularization and stopping parameter choices are also possible; for example, using GCV or WGCV for the regularization parameter and GCV as the stopping criterion, \cite{Chung2008wgcv}; or secant updates on the DP for the regularization parameter and the DP as the stopping criterion \cite{GAZZOLA2014180, Gazzola2020Parameters}. \revision{We leave these avenues open for future research.}

%%%%%%%%%%%%%%%%%%%%%%%%%%%%%%%%%%%%%%%%%%%%%%%%%%%%%%%%%%%%%%%%%%
%%%%%%%%%%%%%%%%%%%%%%% Section Break %%%%%%%%%%%%%%%%%%%%%%%%%%%%
%%%%%%%%%%%%%%%%%%%%%%%%%%%%%%%%%%%%%%%%%%%%%%%%%%%%%%%%%%%%%%%%%%

\section{Theoretical considerations} \label{sec:theory}

In this section we give an overview on what can be said on the convergence of our proposed methods: IR-FGMRES, IR-FLSQR, CIR-FGMRES and CIR-FLSQR.
In all the derivations, we consider that the regularization parameter $\lambda \geq 0$ is fixed throughout the iterations\revision{. That is, we drop} the dependency on the iteration count with respect to the more general notation $\lambda_k$ that we use in the \revision{prior sections.} \st{methods section.}

First, note that IR-FGMRES and IR-FLSQR without restarts solve 
\begin{eqnarray}
     \bfx_k &=& \bfx_{k-1}+\bfh_k \nonumber\\
     &=& \bfx_{k-1}+\arg\min_{\bfh \in \mathcal{R}(\bfZ_k)}\|\bfA\bfh-\bfr_{k-1}\|_2^2+\lambda \|\widetilde{W}(\bfL \bfx_{k-1})\bfL(\bfh+\bfx_{k-1})\|_2^2.
     \label{eq:proofs1}
\end{eqnarray}
Since $\bfx_{k-1}\in\mathcal{R}(\bfZ_{k-1}) \subset \mathcal{R}(\bfZ_{k})$, and using $\bfr_{k-1}=\bfb-\bfA\bfx_{k-1}$; the solution $\bfx_k$ in \eqref{eq:proofs1} solves
\begin{eqnarray}
    \min_{\bfx \in \mathcal{R}(\bfZ_k)}\|\bfA\bfx-\bfb\|_2^2+\lambda \|\widetilde{W}(\bfL\bfx_{k-1})\bfL\bfx\|_2^2.
     \label{eq:proofs2}
\end{eqnarray}
This means that, for a fixed regularization parameter choice and without restarts, IR-FGMRES and IR-FLSQR are mathematically equivalent to IRW-FGMRES and IRW-FLSQR.

Now we consider IR-FGMRES and IR-FLSQR with restarts. The solution at iteration $k$, where $i$ iterations have happened after a restart (for $i\leq k$), given by either method, can be written as 
\begin{eqnarray}
      \bfx_{k}= \bfx_{k-1}+\arg\min_{\bfh \in \mathcal{R}(\bfZ_i)}\|\bfA(\bfh+\bfx_{k-1})-\bfb\|_2^2+\lambda \|\widetilde{W}(\bfL\bfx_{k-1})\bfL(\bfh+\bfx_{k-1})\|_2^2
     \label{eq:proofs3}
\end{eqnarray}
where $\bfZ_i$ is defined according to \eqref{eq:flexArnoldi} for IR-FGMRES or \eqref{eq:partialFGKB} for IR-FLSQR. Analogously; for CIR-FGMRES and CIR-FLSQR we have an analogous expression to \eqref{eq:proofs3}, but where $\bfh \in \mathcal{R}(\bar{\bfZ}_i)$ and $\bar{\bfZ}_i$ are defined in \eqref{eq:augmentedflexArnoldi} for CIR-FGMRES and  \eqref{eq:augmentedflexGKB} for CIR-FLSQR.  Moreover, in this case, note that the minimization in the analogous equation to \eqref{eq:proofs3} is equivalent to  
\begin{eqnarray}
      \min_{\bfx \in \mathcal{R}(\bar{\bfZ}_i)}\|\bfA\bfx-\bfb\|_2^2+\lambda \|\widetilde{W}(\bfL\bfx_{k-1})\bfL\bfx\|_2^2,
     \label{eq:proofs4}
\end{eqnarray}
since $\bfx_{k-1}\in \mathcal{R}(\bar{\bfZ}_i)$. \\

Now, for $\lambda \geq 0$, we define the following functionals:
\begin{eqnarray}
    T(\bfx) &=& \|\bfA\bfx-\bfb\|_2^2+\lambda \|\widetilde{W}(\bfL\bfx)\bfL\bfx\|_2^2 \label{eq:T}\\
    T_k(\bfx) &=& \|\bfA\bfx-\bfb\|_2^2+\lambda \|\widetilde{W}(\bfL\bfx_{k-1})\bfL\bfx\|_2^2 =\|\bfA\bfx-\bfb\|_2^2+\lambda \|\bfW_{k}\bfL\bfx\|_2^2;\label{eq:Tk}
\end{eqnarray}
where $T(\bfx)$ is the functional that we want to minimize and the $T_k(\bfx)$ are the corresponding quadratic majorants of $T(\bfx)$ at $\bfx=\bfx_{k-1}$ that we consider at each iteration.
\revision{Here, we point out that} $T(\bfx_{k-1}) = T_k(\bfx_{k-1})$.

\begin{proposition}\label{prop1}
    Let $\{\bfx_k\}_k$ be the sequence of approximate solutions computed by either IR-FGMRES, IF-FLSQR, CIR-FGMRES, or CIR-FLSQR (in Algorithms \ref{alg:FIRKS}, \ref{alg:CFIRKS_A} and \ref{alg:CFIRKS_GKB}, respectively). Assume that, for all $\bfW_k=\widetilde{W}(\bfL\bfx_k)$ the following holds: $\mathcal{N}(\bfA)\cap \mathcal{N}(\bfW_k \bfL)$. Then, given a fixed $\lambda \geq 0$, for all $k$: $$0\leq T(\bfx_k)\leq T(\bfx_{k-1}).$$
\end{proposition}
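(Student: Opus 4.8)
The plan is to prove Proposition~\ref{prop1} by the classical majorization--minimization descent argument, using the reformulations \eqref{eq:proofs1}--\eqref{eq:proofs4} established above to identify what each of the four methods minimizes at a given step, and then transferring the resulting inequality from the majorant $T_k$ in \eqref{eq:Tk} back to the target functional $T$ in \eqref{eq:T}.

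First, the left inequality $0\le T(\bfx_k)$ is immediate, since by \eqref{eq:T} the functional $T$ is a sum of two squared Euclidean norms. For the right inequality I would argue through three links. \emph{(a) The update does not increase the current majorant.} Fix an outer index $k$ and let $i$ be the number of inner iterations since the last restart. Using $\bfr_k=\bfb-\bfA\bfx_{k-1}$ and $\bfW_k=\widetilde{W}(\bfL\bfx_{k-1})$, the update computed in line~\ref{alg_step_FIRKS} of Algorithm~\ref{alg:FIRKS} (resp.\ in line~\ref{alg_step_CFIRKS_A} of Algorithm~\ref{alg:CFIRKS_A} and line~\ref{alg_step_CFIRKS_GKB} of Algorithm~\ref{alg:CFIRKS_GKB}) is precisely the minimizer in \eqref{eq:proofs3} (resp.\ in its barred analogue, which by \eqref{eq:proofs4} equals $\argmin_{\bfx\in\mathcal{R}(\bar{\bfZ}_i)}T_k(\bfx)$). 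In the IR-case the search set is the affine space $\bfx_{k-1}+\mathcal{R}(\bfZ_i)$, which contains $\bfx_{k-1}$ (take $\bfh=\bfzero$); in the CIR-case it is $\mathcal{R}(\bar{\bfZ}_i)$, which contains $\bfx_{k-1}$ because the first column of $\bar{\bfZ}_i$ is $\bfx_{k-1}/\|\bfx_{k-1}\|_2$. Either way $\bfx_{k-1}$ is feasible, so $T_k(\bfx_k)\le T_k(\bfx_{k-1})$. \emph{(b) Tangency at $\bfx_{k-1}$.} Since $\bfW_k=\widetilde{W}(\bfL\bfx_{k-1})$, comparing \eqref{eq:T} and \eqref{eq:Tk} gives $T_k(\bfx_{k-1})=T(\bfx_{k-1})$, as already noted just before the proposition. \emph{(c) Majorization.} $T_k$ is the quadratic majorant of $T$ at $\bfx_{k-1}$, hence $T(\bfx)\le T_k(\bfx)$ for every $\bfx$; in particular $T(\bfx_k)\le T_k(\bfx_k)$. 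Chaining (c), (a) and (b) yields $T(\bfx_k)\le T_k(\bfx_k)\le T_k(\bfx_{k-1})=T(\bfx_{k-1})$, which with $T(\bfx_k)\ge0$ is exactly the statement.

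The hypothesis $\mathcal{N}(\bfA)\cap\mathcal{N}(\bfW_k\bfL)=\{\bfzero\}$ enters only to make link (a) well posed: for $\lambda>0$ it makes each $T_k$ strictly convex and coercive, so the projected least-squares problem defining $\bfy_i$ has a unique solution and the minimum over the search set is attained; the inequality $T_k(\bfx_k)\le T_k(\bfx_{k-1})$ itself needs only that $\bfx_k$ be \emph{a} minimizer over a set that contains $\bfx_{k-1}$. I would also handle the degenerate case $\bfx_{k-1}=\bfzero$ in the corrected methods separately, by observing that then no augmenting column is added and the iteration coincides with the uncorrected one.

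The step I expect to require the most care is (a): verifying, uniformly over all four algorithms, that the vector produced in the update line really is a minimizer of the \emph{correct} majorant $T_k$ over a search set that still contains the previous iterate. For IR-FGMRES and IR-FLSQR this is exactly \eqref{eq:proofs3}; for CIR-FGMRES and CIR-FLSQR one must check that prepending $\bfx_{k-1}/\|\bfx_{k-1}\|_2$ to the flexible basis after a restart is precisely what upgrades the affine warm-restart reformulation to the linear reformulation \eqref{eq:proofs4}, so that feasibility of $\bfx_{k-1}$ survives the restart. Once \eqref{eq:proofs3}--\eqref{eq:proofs4} are in hand, the remainder is the one-line majorization--minimization sandwich above.
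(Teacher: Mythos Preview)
Your proposal is correct and follows essentially the same majorization--minimization sandwich as the paper: both establish the chain $T(\bfx_k)\le T_k(\bfx_k)\le T_k(\bfx_{k-1})=T(\bfx_{k-1})$, with the middle inequality obtained by noting that $\bfh=\bfzero$ is feasible in the update problem for all four algorithms. Your write-up is in fact somewhat more careful than the paper's---you separate the IR and CIR feasibility arguments, flag the degenerate case $\bfx_{k-1}=\bfzero$, and explain where the null-space hypothesis actually enters (the paper states it but does not invoke it in this proof; it is only used later for boundedness)---but the core argument is identical.
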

\begin{proof}
First note that the functional $T(\bfx)$ in \eqref{eq:T} is non-negative, so we only need to prove the second inequality. For all methods, it can be easily seen that:
\begin{equation}\label{proof1}
    T(\bfx_k) \leq T_k(\bfx_k) \leq T_k(\bfx_{k-1}) = T(\bfx_{k-1}).
\end{equation}
Here, the first inequality holds since $T_k(\bfx)$ is a majorant of $T(\bfx)$ and the last equality holds since $T_k(\bfx_{k-1}) =T(\bfx_{k-1})$ by definition; this can be verified by evaluating \eqref{eq:T} and \eqref{eq:Tk} at $\bfx_{k-1}$. Therefore, one only needs to check the second inequality for all the methods listed in the Proposition. Note that $\bfx_k$ is computed at each iteration such that 
$$T(\bfx_k)=\min_{\bfh} \|\bfA(\bfh+\bfx_{k-1})-\bfb\|_2^2+\lambda \|\widetilde{W}(\bfL\bfx_{k-1})\bfL(\bfh+\bfx_{k-1})\|_2^2,$$
where either $\bfh \in \mathcal{R}(\bfZ_i)$ (for IR-FGMRES and IR-FLSQR), or $\bfh \in \mathcal{R}(\widetilde{\bfZ}_i)$ (for CIR-FGMRES and CIR-FLSQR). \revision{However, by taking} \st{But, taking} $\bfh=0$, one obtains $T_k(\bfx_{k-1})$, so that $T_k(\bfx_{k})\leq T_k(\bfx_{k-1})$.
\end{proof}

\begin{corollary}
Under the same assumptions of Proposition \ref{prop1}, $\{T(\bfx_k)\}_{k\geq 1}$ has a stationary point.
\end{corollary}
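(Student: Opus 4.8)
The plan is to read the Corollary as a statement purely about the scalar sequence of objective values and to invoke the monotone convergence theorem for real sequences. Proposition~\ref{prop1} already furnishes everything needed: for every $k$ it gives $0\leq T(\bfx_k)\leq T(\bfx_{k-1})$, so the sequence $\{T(\bfx_k)\}_{k\geq 1}$ is non-increasing and bounded below by $0$ (the lower bound being immediate from the fact that $T$ in \eqref{eq:T} is a sum of squared Euclidean norms). A non-increasing sequence of real numbers that is bounded below converges to its infimum; hence $T^\ast:=\lim_{k\to\infty}T(\bfx_k)$ exists and satisfies $0\leq T^\ast\leq T(\bfx_1)$. This limiting value is the ``stationary point'' of the sequence asserted in the statement.

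First I would restate the two consequences of Proposition~\ref{prop1} that are actually used, namely monotonicity and lower-boundedness, so that it is transparent that no new hypothesis beyond ``the same assumptions of Proposition~\ref{prop1}'' is invoked (in particular, the null-space condition on $\bfW_k\bfL$ enters only through its use in Proposition~\ref{prop1}). Then I would cite the monotone convergence theorem to conclude existence of the limit. As an optional strengthening, I would note that the nonnegative increments $\delta_k:=T(\bfx_{k-1})-T(\bfx_k)\geq 0$ telescope, giving $\sum_{k\geq 1}\delta_k = T(\bfx_0)-T^\ast < \infty$, and hence $T(\bfx_{k-1})-T(\bfx_k)\to 0$; this makes precise the intuitive content that the objective eventually ``stagnates.''

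There is essentially no obstacle here: the result is a direct corollary of the monotonicity already established, and the only point requiring a moment of care is interpretational. The claim should be read as convergence of the scalar sequence $\{T(\bfx_k)\}$ (equivalently, vanishing of the successive decreases), and \emph{not} as convergence of the iterates $\bfx_k$ themselves to a critical point of $T$; the latter would require additional compactness or coercivity assumptions on $T$ that are not among the hypotheses, so I would not attempt to prove it. Accordingly, the write-up can be kept to a couple of lines.
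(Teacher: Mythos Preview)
Your proposal is correct and follows exactly the paper's own argument: the paper's proof is the single line ``The sequence $\{T(\bfx_k)\}_{k\geq 1}$ is non-negative and monotonically decreasing,'' which is precisely the monotone-convergence reasoning you give. Your interpretational caveat about ``stationary point'' referring to the scalar sequence rather than a critical point of $T$ is also the intended reading.
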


\begin{proof}
The sequence $\{T(\bfx_k)\}_{k\geq 1}$ is non-negative and monotonically decreasing.
\end{proof}

\begin{proposition}
Under the same assumptions of Proposition \ref{prop1}, the sequence of approximate solutions $\{x_k\}_{k\geq 1}$ computed after $k$ steps of either IR-FGMRES or IR-FLSQR without restarts is such that
$$\lim_{k\rightarrow \infty} \|x_k-x_{k-1}\|_2 = 0.$$
Moreover, the sequence converges to the unique solution of $T(\bfx)$ in \eqref{eq:T}. 
\end{proposition}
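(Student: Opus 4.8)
The plan is to confine the whole iteration to a fixed bounded set on which the weights stay uniformly nondegenerate, then extract $\|\bfx_k-\bfx_{k-1}\|_2\to0$ from the monotone energy decrease of Proposition~\ref{prop1}, and finally upgrade this to convergence of the iterates. \emph{Step 1: boundedness and uniform conditioning.} I would dispose of $\lambda=0$ first (then $T_k=T$ does not involve $\bfL$ and Algorithm~\ref{alg:FIRKS} is just unpreconditioned GMRES/LSQR, which terminates after finitely many steps), and assume $\lambda>0$. Using $p>0$ and $\tau>0$, the smoothed penalty $\sum_i([\bfL\bfx]_i^2+\tau^2)^{\frac{p-2}{2}}[\bfL\bfx]_i^2$ grows like $\|\bfL\bfx\|_p^p$ and hence tends to $+\infty$ whenever $\|\bfL\bfx\|_2\to\infty$; combined with $\mathcal N(\bfA)\cap\mathcal N(\bfL)=\mathcal N(\bfA)\cap\mathcal N(\bfW_k\bfL)=\{\mathbf0\}$ this makes $T$ in \eqref{eq:T} coercive. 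Since $T(\bfx_k)\le T(\bfx_0)$ for all $k$ by Proposition~\ref{prop1}, the sequence stays in the bounded sublevel set $\{T\le T(\bfx_0)\}$; in particular $\{\bfL\bfx_k\}_k$ is bounded, so the diagonal entries of $\bfW_k=\widetilde W(\bfL\bfx_{k-1})$ satisfy $0<c_1\le[\bfW_k]_{ii}\le c_2$ with $c_1,c_2$ independent of $k$. Writing $\widehat\bfA_k:=\begin{bmatrix}\bfA\\ \sqrt{\lambda}\,\bfW_k\bfL\end{bmatrix}$, I then get $\|\widehat\bfA_k\bfv\|_2^2=\|\bfA\bfv\|_2^2+\lambda\|\bfW_k\bfL\bfv\|_2^2\ge\|\bfA\bfv\|_2^2+\lambda c_1^2\|\bfL\bfv\|_2^2\ge\mu\|\bfv\|_2^2$ for a single $\mu>0$, because the quadratic form $\bfv\mapsto\|\bfA\bfv\|_2^2+\|\bfL\bfv\|_2^2$ is positive definite under the null-space assumption.

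\emph{Step 2: the solution updates vanish.} As in the proof of Proposition~\ref{prop1}, $T(\bfx_k)\le T_k(\bfx_k)\le T_k(\bfx_{k-1})=T(\bfx_{k-1})$, so $\{T(\bfx_k)\}_k$ decreases to a limit and $0\le T_k(\bfx_{k-1})-T_k(\bfx_k)\le T(\bfx_{k-1})-T(\bfx_k)\to0$. Now $T_k(\bfx)=\|\bfA\bfx-\bfb\|_2^2+\lambda\|\bfW_k\bfL\bfx\|_2^2=\|\widehat\bfA_k\bfx-\widehat\bfb\|_2^2$, where $\widehat\bfb$ stacks $\bfb$ over the zero vector, is quadratic, and by \eqref{eq:proofs2} (which holds when there are no restarts) $\bfx_k$ minimizes it over the subspace $\mathcal R(\bfZ_k)\ni\bfx_{k-1}$. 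The Galerkin optimality of $\bfx_k$ makes its residual orthogonal to $\widehat\bfA_k\mathcal R(\bfZ_k)$, so the Pythagorean identity gives $T_k(\bfx_{k-1})-T_k(\bfx_k)=\|\widehat\bfA_k(\bfx_{k-1}-\bfx_k)\|_2^2\ge\mu\|\bfx_k-\bfx_{k-1}\|_2^2$. Combining this with the previous bound yields $\|\bfx_k-\bfx_{k-1}\|_2\to0$, which is the first assertion.

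\emph{Step 3: convergence to the minimizer.} Being bounded, $\{\bfx_k\}_k$ has accumulation points; let $\bfx_{k_j}\to\bfx^\star$. By Step~2 also $\bfx_{k_j-1}\to\bfx^\star$, hence $\bfW_{k_j}=\widetilde W(\bfL\bfx_{k_j-1})\to\widetilde W(\bfL\bfx^\star)$ by continuity. The nested spaces $\mathcal R(\bfZ_1)\subseteq\mathcal R(\bfZ_2)\subseteq\cdots\subseteq\mathbb R^n$ stabilize after finitely many steps; assuming the flexible Arnoldi / Golub--Kahan process does not break down prematurely (cf.\ the remark after Algorithm~\ref{flexibleGKB_step}) and $\bfA$ has full column rank, this limiting space is all of $\mathbb R^n$, so for large $k$ the iterate $\bfx_k$ is the unconstrained minimizer of $T_k$, i.e.\ $\nabla T_k(\bfx_k)=2\bfA^T(\bfA\bfx_k-\bfb)+2\lambda\bfL^T\bfW_k^2\bfL\bfx_k=\mathbf0$. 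Passing to the limit along $\{k_j\}$ and using that $\nabla T(\bfx^\star)$ equals the gradient at $\bfx^\star$ of the majorant $\|\bfA\bfx-\bfb\|_2^2+\lambda\|\widetilde W(\bfL\bfx^\star)\bfL\bfx\|_2^2$, I obtain $\nabla T(\bfx^\star)=\mathbf0$: every accumulation point is a stationary point of $T$. Invoking the IRLS convergence theory under the null-space condition \cite{daubechies2010} to identify this stationary point as the unique minimizer of \eqref{eq:T}, and since all accumulation points of the bounded sequence coincide with it, $\{\bfx_k\}_k$ converges to that minimizer.

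\emph{Main obstacle.} The genuinely delicate step is the last one: deducing actual convergence from ``$\|\bfx_k-\bfx_{k-1}\|_2\to0$ and every accumulation point is stationary'', since a priori the accumulation set could be a continuum of stationary points. Closing this gap needs either a bona fide uniqueness statement for the minimizer of the \emph{smoothed} functional $T$ (which, unlike the unsmoothed $\ell_p$ penalty, is not convex in general) or a {\L}ojasiewicz-type inequality relating the decrease $T(\bfx_{k-1})-T(\bfx_k)$ to $\|\bfx_k-\bfx_{k-1}\|_2$ tightly enough to force a Cauchy sequence. A second, more technical point to handle carefully is that the flexible search space eventually exhausts $\mathbb R^n$: this relies on the no-breakdown hypothesis and, for IR-FLSQR, excludes genuinely underdetermined $\bfA$, for which $\mathcal R(\bfZ_k)$ remains confined to an at most $m$-dimensional subspace.
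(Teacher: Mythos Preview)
Your Steps~1 and~2 give a clean, self-contained proof of the first assertion $\|\bfx_k-\bfx_{k-1}\|_2\to0$, and the Pythagorean argument via Galerkin orthogonality on the stacked system $\widehat\bfA_k$ is correct and elegant. This is genuinely different from what the paper does: the paper's proof is a one-liner that observes IR-FGMRES and IR-FLSQR without restarts are mathematically equivalent to IRW-FGMRES and IRW-FLSQR (established in \eqref{eq:proofs1}--\eqref{eq:proofs2}) and then cites \cite[Theorem~3.4]{Gazzola2021IRW} for both conclusions. So where you attempt to re-derive the result from scratch, the paper simply transfers it from existing literature. Your route has the advantage of exposing exactly where the coercivity and the uniform conditioning of the weights enter; the paper's route has the advantage of not having to reprove Step~3.

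For Step~3, the gaps you flag are real and are precisely the ones the paper sidesteps by citation. Two concrete issues: first, your claim that the flexible search spaces $\mathcal R(\bfZ_k)$ eventually exhaust $\bbR^n$ is not guaranteed by the hypotheses of the proposition; the no-breakdown assumption and full column rank of $\bfA$ are extra conditions you have introduced, and for flexible Krylov processes even these do not obviously force $\mathcal R(\bfZ_k)=\bbR^n$ in finitely many steps. Second, as you note, passing from ``every accumulation point is stationary for $T$'' to ``the sequence converges to the unique minimizer'' requires a uniqueness or \L ojasiewicz-type ingredient that your argument does not supply and that is not trivial for the smoothed, generally nonconvex $T$ when $0<p<1$. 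The cited result in \cite{Gazzola2021IRW} handles exactly this; reproducing that argument would be the honest way to close your Step~3.

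A minor correction: your treatment of $\lambda=0$ is off. Even with $\lambda=0$ the search space in Algorithm~\ref{alg:FIRKS} is still built with the iteration-dependent preconditioners $(\bfW_k\bfL)^{-1}$, so the method is not ``unpreconditioned GMRES/LSQR'' but FGMRES/FLSQR with nontrivial flexible preconditioning; only the projected objective drops the regularization term.
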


\begin{proof}
    Since IR-FGMRES and IR-FLSQR are mathematically equivalent to IRW-FGMRES and IRW-FLSQR, the theoretical guarantees for convergence follow readily from \cite[Theorem 3.4]{Gazzola2021IRW}. 
\end{proof}

\begin{proposition}
Under the same assumptions of Proposition \ref{prop1}, the sequence of approximate solutions $\{x_k\}_{k\geq 1}$ computed after $k$ steps of either IR-FGMRES, IR-FLSQR, CIR-FGMRES or CIR-FLSQR after any possible number of restarts has a convergent subsequence.
\end{proposition}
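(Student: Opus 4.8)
The plan is to obtain the convergent subsequence from the Bolzano--Weierstrass theorem, so it is enough to show that $\{\bfx_k\}_{k\ge1}\subset\mathbb{R}^n$ is bounded. Proposition~\ref{prop1} already covers exactly the (restarted) algorithms in question, and its proof only uses that $\bfh=0$ is an admissible choice in the subspace minimization that defines $\bfx_k$ --- which remains true after any number of restarts, since the search spaces are linear subspaces. Thus $0\le T(\bfx_k)\le T(\bfx_{k-1})\le\cdots\le T(\bfx_0)$ for all $k$, with $T$ as in \eqref{eq:T}, so the whole sequence lies in the sublevel set $\{\bfx:T(\bfx)\le T(\bfx_0)\}$, and the task reduces to showing that this set is bounded, i.e.\ that $T$ is coercive.

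For the coercivity step I would argue as follows, assuming $\lambda>0$. Since each $\bfW_k=\widetilde{W}(\bfL\bfx_k)$ is diagonal with strictly positive entries, $\mathcal{N}(\bfW_k\bfL)=\mathcal{N}(\bfL)$, so the standing hypothesis is $\mathcal{N}(\bfA)\cap\mathcal{N}(\bfL)=\{0\}$. Writing $\|\widetilde{W}(\bfL\bfx)\bfL\bfx\|_2^2=\sum_{i=1}^{s}g\bigl([\bfL\bfx]_i\bigr)$ with $g(t)=(t^2+\tau^2)^{(p-2)/2}\,t^2$ from \eqref{eq:l1weightssmooth}, one checks that $g$ is continuous and nonnegative with $g(t)\sim|t|^p\to\infty$ as $|t|\to\infty$ (here $0<p\le2$), hence $\bft\mapsto\sum_i g(t_i)$ is coercive on $\mathbb{R}^s$. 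Now take $\|\bfx_j\|_2\to\infty$ and split $\bfx_j=\bfu_j+\bfw_j$ with $\bfu_j\in\mathcal{N}(\bfA)$, $\bfw_j\in\mathcal{N}(\bfA)^{\perp}$. If $\{\bfw_j\}$ is unbounded then, along a subsequence, $\|\bfA\bfx_j-\bfb\|_2\to\infty$ because $\bfA$ is bounded below on $\mathcal{N}(\bfA)^{\perp}$. If instead $\{\bfw_j\}$ is bounded then $\|\bfu_j\|_2\to\infty$; since $\bfL$ is injective on the finite-dimensional subspace $\mathcal{N}(\bfA)$ we get $\|\bfL\bfu_j\|_2\to\infty$, hence $\|\bfL\bfx_j\|_2\ge\|\bfL\bfu_j\|_2-\|\bfL\bfw_j\|_2\to\infty$ and so $\sum_i g([\bfL\bfx_j]_i)\to\infty$. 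Either way $T(\bfx_j)\to\infty$, so $T$ is coercive, the sublevel set is bounded, and Bolzano--Weierstrass delivers a convergent subsequence of $\{\bfx_k\}_{k\ge1}$.

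The coercivity step is where I expect the real work to be. The subtlety is that $T$ is not a quadratic form --- the weights $\widetilde{W}(\bfL\bfx)$ themselves depend on $\bfx$ --- so one cannot simply invoke positive definiteness of a fixed matrix; instead one must trace how an unbounded direction is forced through either $\bfA$ or $\bfL$ using $\mathcal{N}(\bfA)\cap\mathcal{N}(\bfL)=\{0\}$, and observe that the smoothed regularizer still diverges even though it only grows like $|t|^p$. A secondary issue to handle with care is the degenerate case $\lambda=0$: then $T(\bfx)=\|\bfA\bfx-\bfb\|_2^2$ is no longer coercive once $\mathcal{N}(\bfA)\ne\{0\}$, and boundedness of the iterates would require a separate argument bounding the flexible bases $\bfZ_i$ (or $\bar{\bfZ}_i$) directly, unless one simply restricts attention to the regularized regime $\lambda>0$.
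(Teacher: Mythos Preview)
Your argument is correct and reaches the same conclusion via Bolzano--Weierstrass, but the route to boundedness differs from the paper's. The paper does not prove coercivity of the nonlinear functional $T$; instead it bounds the two pieces separately through the quadratic majorant: from $\|\bfA\bfx_k-\bfb\|_2^2\le T(\bfx_k)\le T(\bfx_1)=:c_1$ and, via \eqref{proof1}, $\|\bfW_k\bfL\bfx_k\|_2^2\le\lambda^{-1}T_k(\bfx_k)\le\lambda^{-1}c_1$, it adds these and invokes $\mathcal{N}(\bfA)\cap\mathcal{N}(\bfW_k\bfL)=\{0\}$ to conclude $\|\bfx_k\|_2\le C$. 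Your approach trades this for a direct coercivity argument on $T$ itself, splitting along $\mathcal{N}(\bfA)$ and using that the smoothed penalty grows like $|t|^p$. What your route buys is that it sidesteps a point the paper leaves implicit: since the weights $\bfW_k$ vary with $k$, the uniform constant $C$ in the paper's last step is not automatic from the null-space condition alone (the smallest singular value of $[\bfA;\bfW_k\bfL]$ could in principle degrade), whereas your sublevel-set argument works with the single fixed functional $T$ and avoids this circularity. The paper's route is shorter and reuses the majorant inequality already in hand; yours is more self-contained. Two minor remarks: the paper anchors the bound at $T(\bfx_1)$ rather than $T(\bfx_0)$, which is slightly safer since $\bfW_1=\bfI$ in the algorithms is not literally $\widetilde{W}(\bfL\bfx_0)$; and, as you note, both arguments need $\lambda>0$ --- the paper uses $\lambda^{-1}$ without comment, so your explicit flag of the $\lambda=0$ case is a fair observation.
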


\begin{proof}
By the Bolzano–Weierstrass theorem, we only need to prove that all the elements of the sequence $\{x_k\}_{k\geq 1}$ have bounded norm. First, note that, since $\lambda \geq 0$,
$$\|\bfA \bfx_k - \bfb\|^2_2 \leq T(\bfx_k)\leq T(\bfx_1) := c_1, $$
where the second inequality holds due to Proposition \ref{prop1}. Moreover, due to inequality \eqref{proof1},
$$\|\bfW_k\bfL \bfx_k\|^2_2 \leq \lambda^{-1} T_k(\bfx_{k}) \leq \lambda^{-1} T(\bfx_{k-1})\leq \lambda^{-1} c_1,$$
so that 
$$\|\bfA \bfx_k - \bfb\|^2_2 + \|\bfW_k\bfL \bfx_k\|^2_2 \leq (1+\lambda^{-1})c_1, $$
and since $\mathcal{N}(\bfA)\cap \mathcal{N}(\bfW_k \bfL)$, there exists a constant $C$ such that
$$\|\bfx_k\|_2^2\leq C \quad \forall k\geq 1.$$
\end{proof}
Note that this proof is similar to that of \cite{Buccini2023restart}.

%%%%%%%%%%%%%%%%%%%%%%%%%%%%%%%%%%%%%%%%%%%%%%%%%%%%%%%%%%%%%%%%%%
%%%%%%%%%%%%%%%%%%%%%%% Section Break %%%%%%%%%%%%%%%%%%%%%%%%%%%%
%%%%%%%%%%%%%%%%%%%%%%%%%%%%%%%%%%%%%%%%%%%%%%%%%%%%%%%%%%%%%%%%%%

\section{Numerical experiments}\label{sec:numerics}
In this section, we show three different numerical experiments to highlight the performance of our newly proposed methods. First, we show a small one-dimensional deblurring example with the intention to showcase the importance of using \revision{an effective} search subspace for the solution. \revision{We note that this is a very small problem, so restarting is unnecessary.} Then, we present two imaging examples: a deblurring example and an oversampled, but highly noisy tomography problem. In these examples, we show the benefits of restarting both in terms of memory and convergence.

In all examples, comparisons with the following methods are performed:
\begin{itemize}
    \item H-GMRES / H-LSQR: hybrid versions of GMRES and LSQR, see, e.g. \cite{Chung2024survey}. Implementation provided in \cite{IRtools}.
    \item H-FGMRES / H-LSQR: hybrid versions of flexible GMRES and LSQR, see, e.g. \cite{JulianneSilvia}. Implementation provided in \cite{IRtools}.
    \item IRW-FGMRES / IRW-LSQR: iteratively reweighted flexible GMRES and LSQR, as described in \cite{Gazzola2021IRW}.
    \item IRN-GMRES / IRN-LSQR: inner-outer scheme where the weights are fixed thoughout the inner iterations, and the solution is computed using \revision{H-GMRES} or \revision{H-LSQR}.
    \item FISTA: Fast Iterative Shrinkage-Thresholding Algorithm \revision{as described in \cite{beck2009fast}} with user-specified regularization parameters. Implementation provided in \cite{IRtools}.
    \item SpaRSA: Sparse Reconstruction by Separable Approximation, as described in \cite{wright2009sparse} with user-specified regularization parameters, and using their implementation.
\end{itemize}

Unless stated otherwise, all regularization parameters are chosen using the discrepancy principle; and the stopping criteria to trigger a restart is the stabilization of the regularization parameter \revision{(as described in Section \ref{sec:reg_parag})} or a prescribed maximum amount of basis vectors. All the results presented in this section were performed using MATLAB on an M-series MacBook Pro.

\subsection{One dimensional deblurring problem}
The first numerical experiment concerns a one dimensional deblurring example, where the entries of the system matrix $\bfA\in \mathbb{R}^{64 \times 64}$ are defined by 
$$[\bfA]_{i,j}=\frac{1}{2\sqrt{2\pi}} e^{-\frac{(i-j)^2}{8}};$$
and, following \cite{Trussell1983Convergence}, the true solution $\mathbf{x_{\textbf{true}}}$ shows 4 peaks simulating an X-ray spectrum. Moreover, the right hand side $\mathbf{b}$ is corrupted by Gaussian noise \revision{with zero mean and fixed variance to correspond to a $1\%$ noise level.} \st{with $1\%$ noise level.} The solution and the measurements can be observed in Figure \ref{Ex1_setting}.

\begin{figure}[ht!]
    \centering
    \begin{subfigure}[b]{0.4\textwidth}
        \centering
        \includegraphics[width=0.7\textwidth]{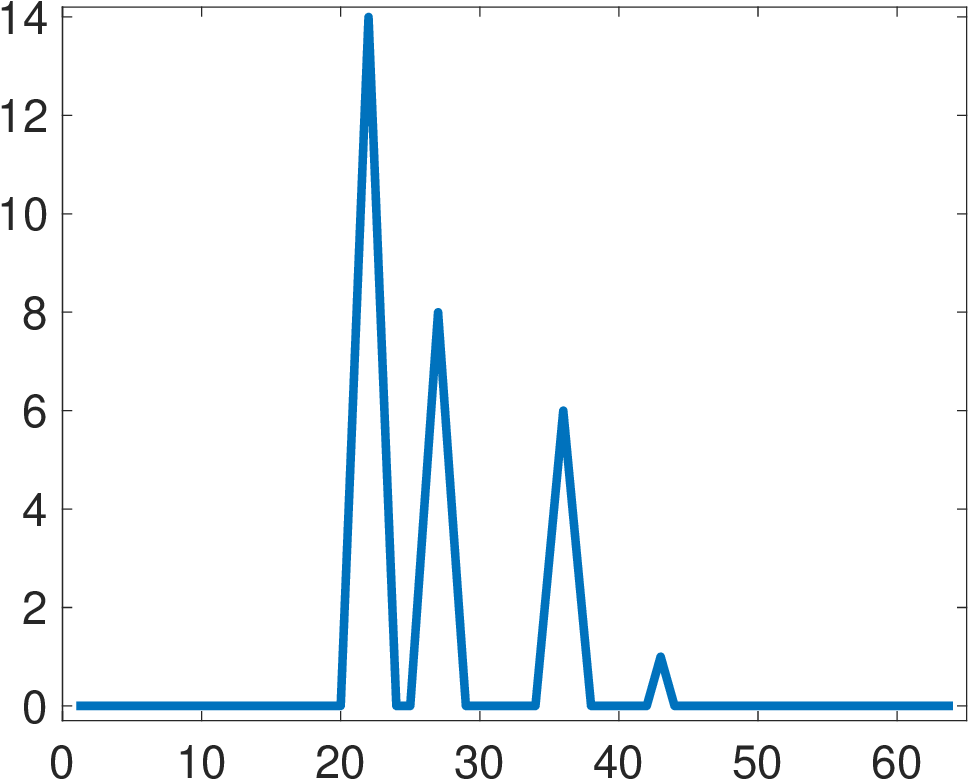}
        \caption*{True solution: $\mathbf{x_{\textbf{true}}}$}
    \end{subfigure}
    \begin{subfigure}[b]{0.4\textwidth}
        \centering
        \includegraphics[width=0.7\textwidth]{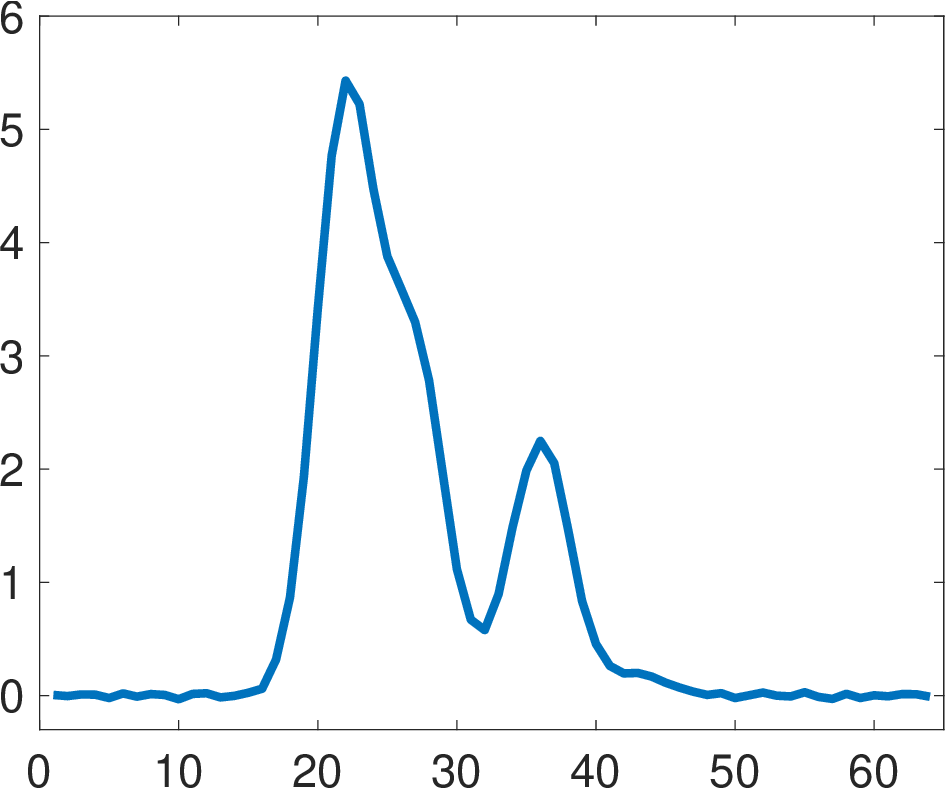}
        \caption*{Noisy measurements: $\mathbf{b}$}
    \end{subfigure}
    \caption{Setting for the 1D signal restoration example Spectra.}\label{Ex1_setting}
\end{figure}

One of the things that we would like to highlight in this example is the suitability of the basis vectors provided by flexible Kryov subspaces with respect to their standard (non-flexible) counterparts. In Figure \ref{Ex1_basis}, we display a few basis vectors \revision{(row-wise) corresponding to the search spaces of our proposed methods:} \st{the search spaces corresponding to the new methods:} IR-FGMRES and IR-FLSQR, as well as those from standard GMRES and LSQR. In all frames, we further show the true solution (in black discontinuous lines). One can easily observe that the flexible bases are much more tailored to the solution; in particular, they are much closer to 0 when the solution is zero\revision{; this is due} to the effect of the sparsity-promoting weights coming from the IRN framework.

\begin{figure}[ht!]
    \centering
    \includegraphics[width=1\textwidth]{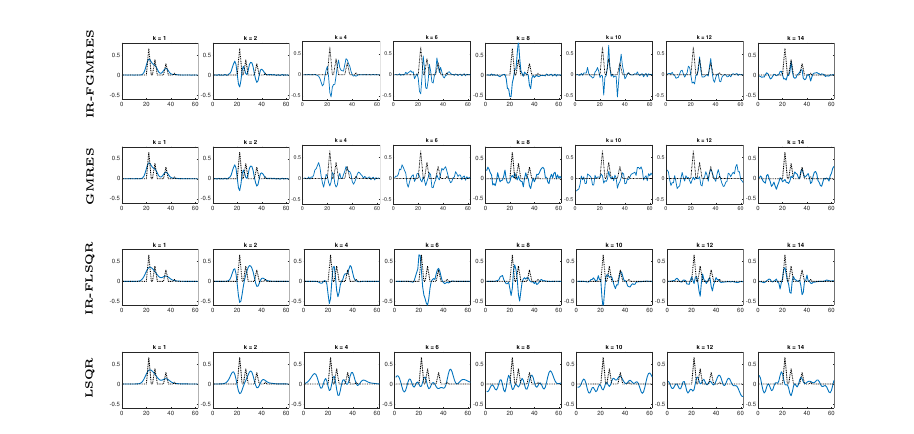}    
    \caption{Selection of basis vectors for the search spaces corresponding to the new methods: IR-FGMRES and IR-FLSQR; in comparison with the standard GMRES and LSQR for the test problem Spectra. Underlaid, in black discontinuos lines, the true solution.}\label{Ex1_basis}
\end{figure}

Moreover, in Figure \ref{Ex1_performance}, we show the performance of the new methods. In the first column, we can see \revision{the good agreement between the errors when comparing the use of the optimal regularization parameter to the one determined by the discrepancy principle, which indicates} that the discrepancy principle is a very good regularization parameter choice method for this problem. Therefore, we present results for this parameter choice in the second and third column. In particular, in the second column, we show that the new methods are very competitive with respect to other standard solvers based on standard and flexible Krylov subspace methods; and in the last column, we show that the new methods display a much faster convergence than other standard solvers considering $\ell_1$ regularization. Moreover, we note that FISTA and SpaRSA require a user-specified regularization parameter \revision{making these methods more cumbersome to use compared with Krylov methods discussed herein}. In this case, we use the regularization parameter at the end of the iterations given by IR-FGMRES (top row) and IR-FLSQR (bottom row). \revision{We again clarify that because this is a very small problem, we do not make use of restarts.}\st{Finally, since this is a very small problem, note that we do not make use of restarts.}

\begin{figure}[ht!]
    \centering
    {\rotatebox[origin=c]{90}{\hspace{3.3cm} \small(flexible) Arnoldi}} 
    \begin{subfigure}[b]{0.30\textwidth}
        \centering
        \includegraphics[width=0.95\textwidth]{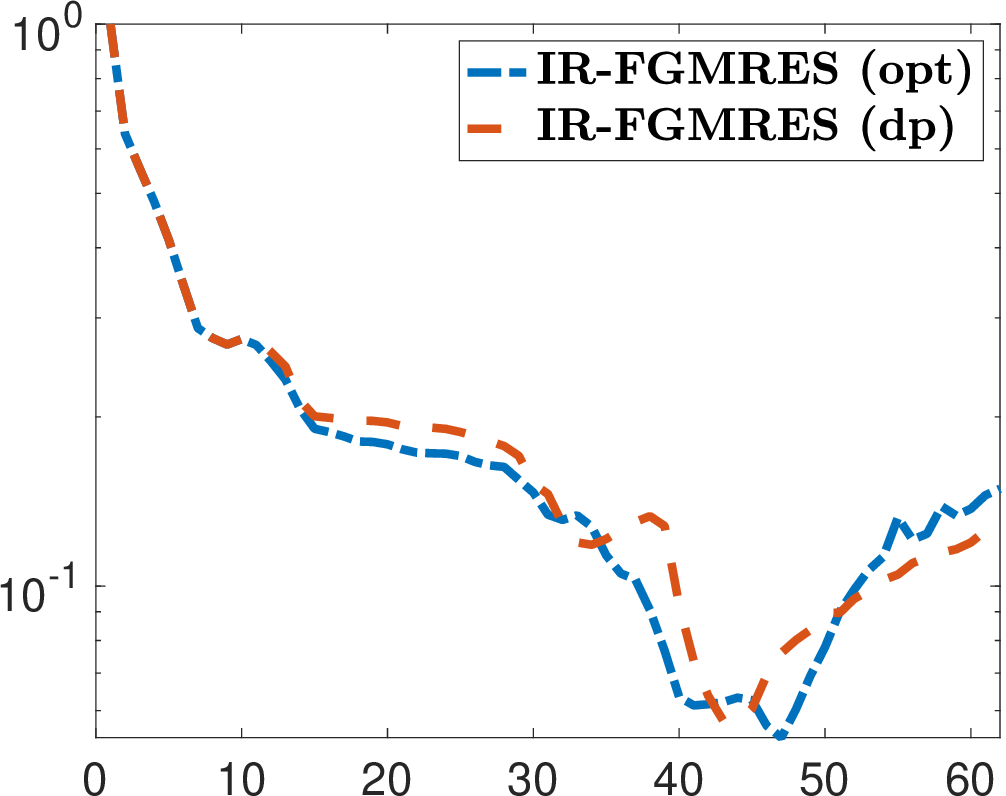}
    \end{subfigure}
    \begin{subfigure}[b]{0.30\textwidth}
        \centering
        \includegraphics[width=0.95\textwidth]{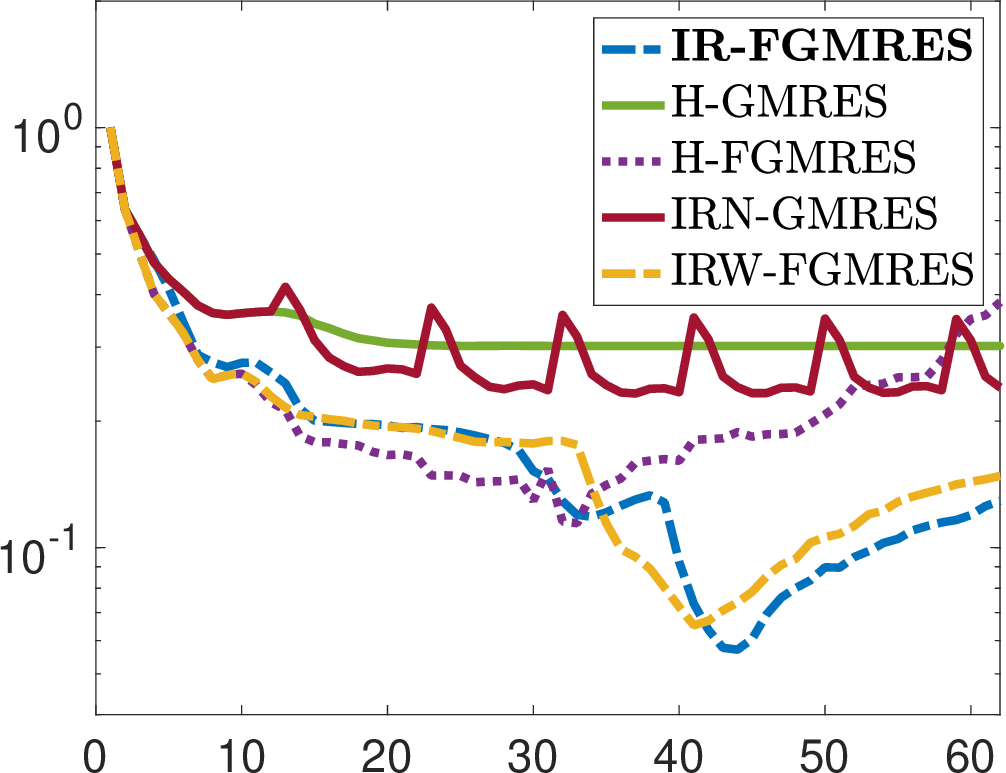}
    \end{subfigure}
    \begin{subfigure}[b]{0.30\textwidth}
        \centering
        \includegraphics[width=0.95\textwidth]{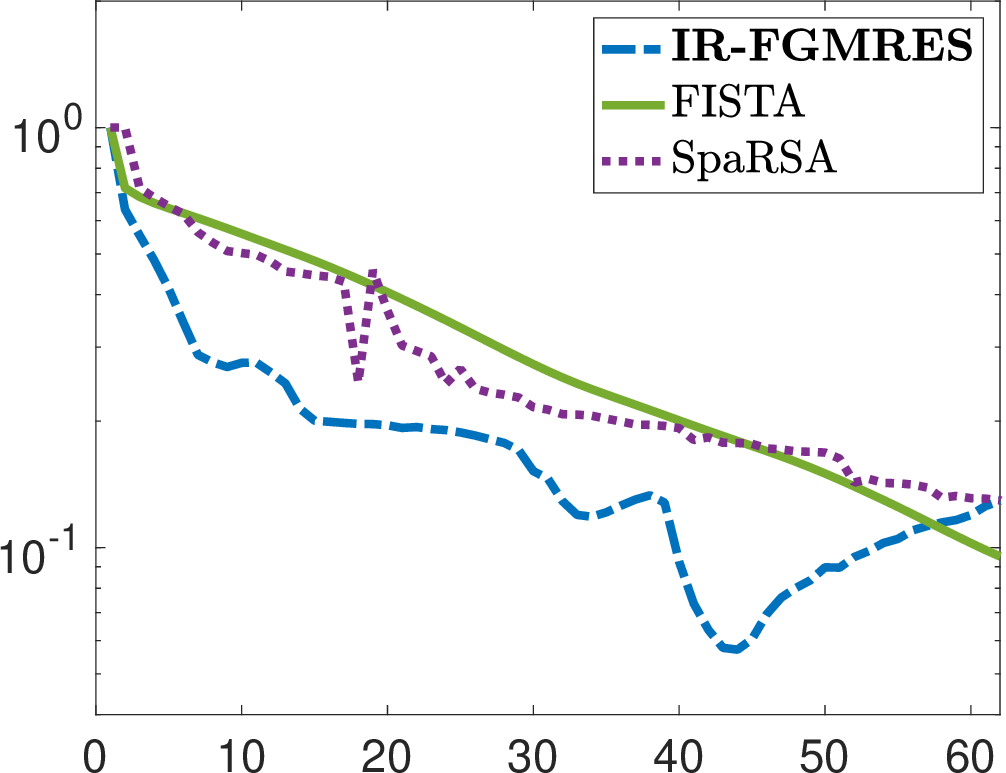}
    \end{subfigure} \\ \vspace{-2.7cm}
        {\rotatebox[origin=c]{90}{\hspace{4.3cm}\small(flexible) Golub-Kahan}}
    \begin{subfigure}[b]{0.30\textwidth}
        \centering
        \includegraphics[width=0.95\textwidth]{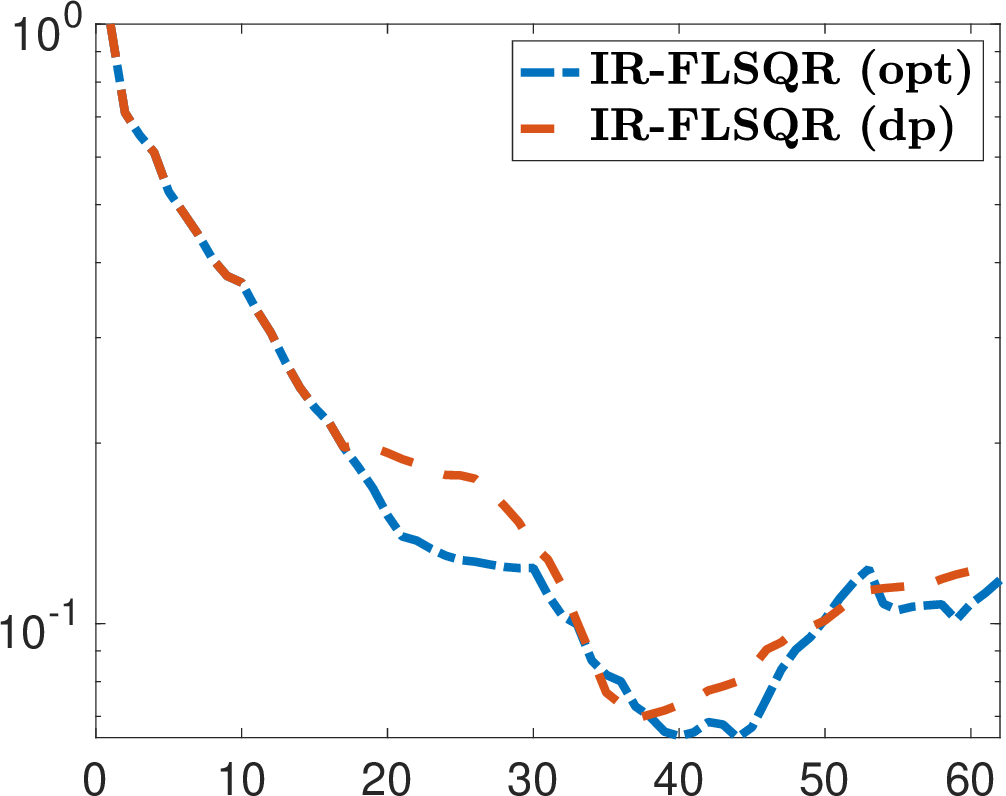}
         \caption*{Different $\lambda_k$ choices}
    \end{subfigure}
    \begin{subfigure}[b]{0.30\textwidth}
        \centering
        \includegraphics[width=0.95\textwidth]{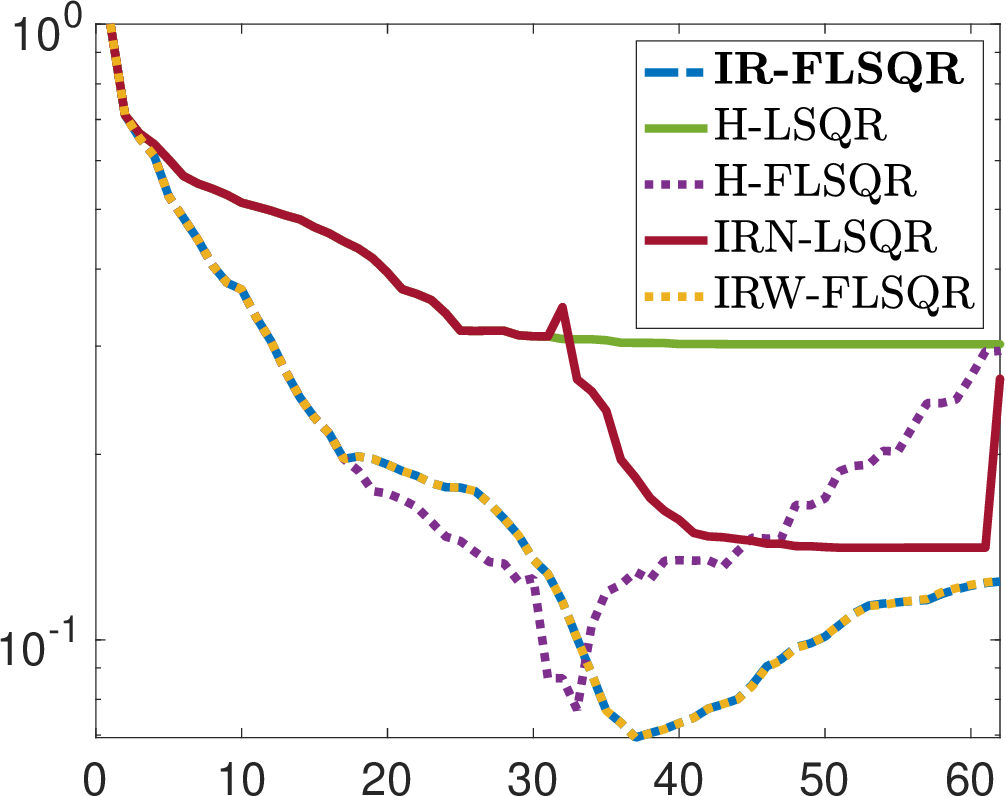}
        \caption*{Other (flexible) Krylov methods}
    \end{subfigure}
    \begin{subfigure}[b]{0.30\textwidth}
        \centering
        \includegraphics[width=0.95\textwidth]{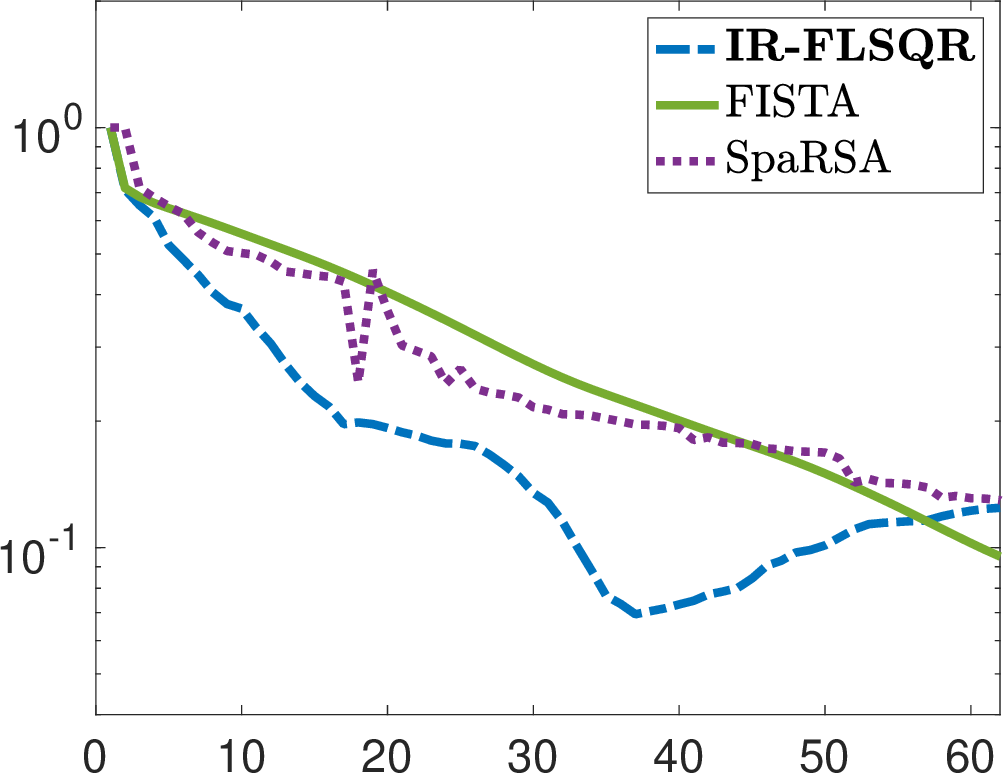}
        \caption*{Other $\ell_1$ solvers}
    \end{subfigure}
    \vspace{-3.5cm}
    \caption{Different error norm histories across the iterations for the test problem Spectra. In the top row, comparisons between methods based on the (flexible) Arnoldi method; on the bottom row, comparisons between methods based on the (flexible) Golub-Kahan method. In the second and third column, the discrepancy principle is used at each iteration when possible. Note that FISTA and SpaRSA require a user-specified regularization parameter.}\label{Ex1_performance}
\end{figure}

\subsection{Deblurring star cluster example}
In this example, we look at a deblurring problem modeled by a square matrix $\mathbf{A} \in \mathbb{R}^{65536 \times 65536}$ representing a spatially variant blur. A detailed explanation of this forward operator as well as the efficient implementation of relevant matrix-vector products can be found in \cite{Nagy1998degraded}. The exact solution is a sparse image (in the sense that only 7.2 $\%$ of the pixels are greater than $10^{-10}$) which models a starry sky, and is taken from \cite{RestoreTools}. To highlight the effect of the regularization term, we again consider Gaussian noise with a high noise level of $50\%$. Note that this example is also featured in \cite{Gazzola2014GAT, Gazzola2021IRW}. The solution and the noisy measurements can be observed in Figure \ref{Ex2_setting}.

\begin{figure}[ht!]
    \centering
    \begin{subfigure}[b]{0.47\textwidth}
        \centering
        \includegraphics[width=0.7\textwidth]{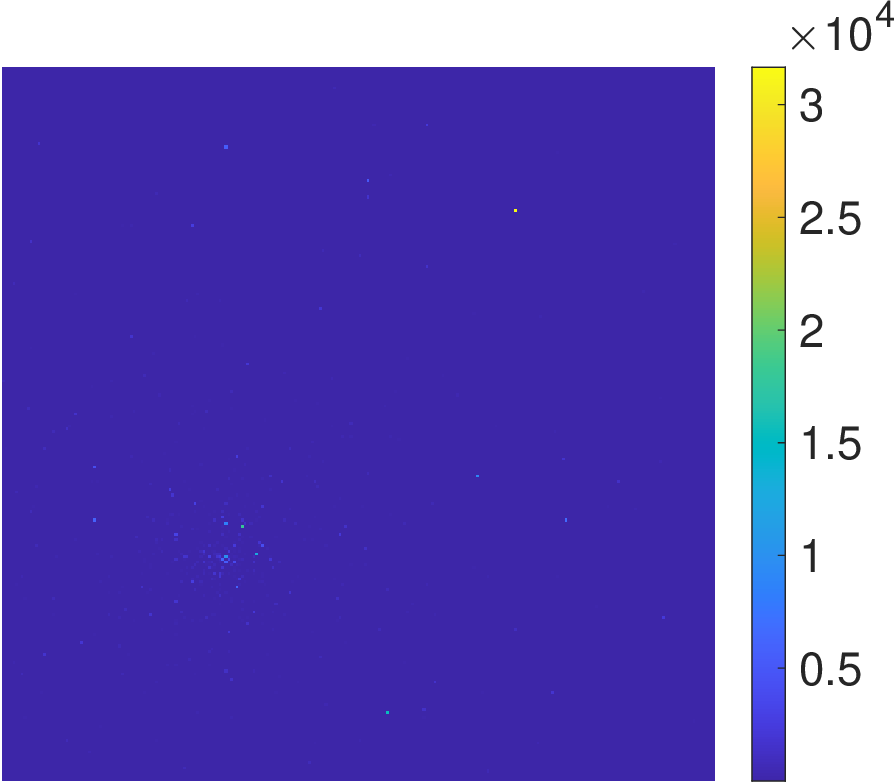}
        \caption*{True solution: $\mathbf{x_{\textbf{true}}}$}
    \end{subfigure}
    \begin{subfigure}[b]{0.47\textwidth}
        \centering
        \includegraphics[width=0.7\textwidth]{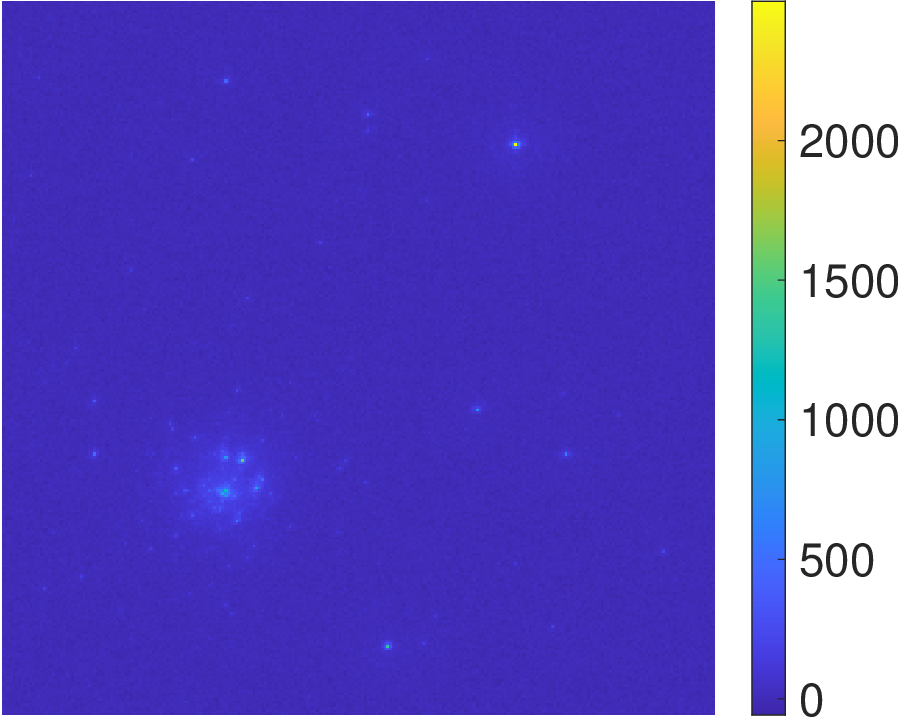}
        \caption*{Noisy measurements: $\mathbf{b}$}
    \end{subfigure}
    \caption{True image and measurements for the test problem star cluster. Note that the color bars are in different scales. This is because the measurements have lost contrast due to the blurring, and it would be hard to visualize $\mathbf{b}$ otherwise.} \label{Ex2_setting}
\end{figure}

In Figure \ref{Ex2_performance} we can observe the relative error norms throughout the iterations. For the new methods IR-FMGRES and CIR-FGMRES (as well as IRN-GMRES), we use (subspace) restarts which are triggered by the stabilization of the regularization parameter, and never store more than 30 basis vectors to \revision{simulate}\st{reproduce} a setting where we have memory constraints or fast memory \revision{is expensive to utilize.}\st{comes at a premium.} In the first panel, we \revision{study the choice of the regularization parameter.} First, \revision{we note that for this particular example and for the (not corrected) iterative refinement method (IR-FGMRES),} \st{note that for the (not corrected) iterative refinement method (IR-FGMRES), and for this example,} the discrepancy principle does a good job at finding a regularization parameter \revision{that results in, on average, a low relative error. However, the behavior is highly oscillatory, which is not ideal. When we consider the corrected iterative refinement method (CIR-FGMRES) however, the solution is much more stable with respect to the choice of the regularization parameter. We do note however, that the average error falls short compared to the use of the optimal regularization parameter choice.} 
\st{However, this is not true for the corrected iterative refinement method (CIR-FGMRES); which shows a more stable behaviour but falls short compared to the optimal choice.} For consistency, we have decided to show the results using the \revision{discrepancy principle}; but further exploration with other regularization parameters might be needed in practice. 

In the second panel of Figure \ref{Ex2_performance}, it can be observed that the performance of the new methods in terms of relative error norm is better than other standard flexible and hybrid Krylov methods. Finally, in the third panel, we can observe that the new corrected method has a much faster convergence than FISTA, and that initially the error decays much faster than for SpaRSA. However, in this example, the error norm stagnates when using the discrepancy principle as a regularization parameter, so we also provide the results using the optimal parameter choice for reference. Note that, in any way, FISTA and SpaRSA require a user-specified regularization parameter; so we use the regularization parameter at the end of the iterations given by CIR-FGMRES. Flexible and hybrid Krylov methods, including IR-FGMRES and CIR-FGMRES, have the natural advantage of allowing for efficient automatic parameter selection throughout the iterations. In this case, we use the regularization parameter at the end of the iterations given by IR-FGMRES (top row) and IR-FLSQR (bottom row).

\begin{figure}[ht!]
    \centering
    \begin{subfigure}[b]{0.32\textwidth}
        \centering
        \includegraphics[width=0.95\textwidth]{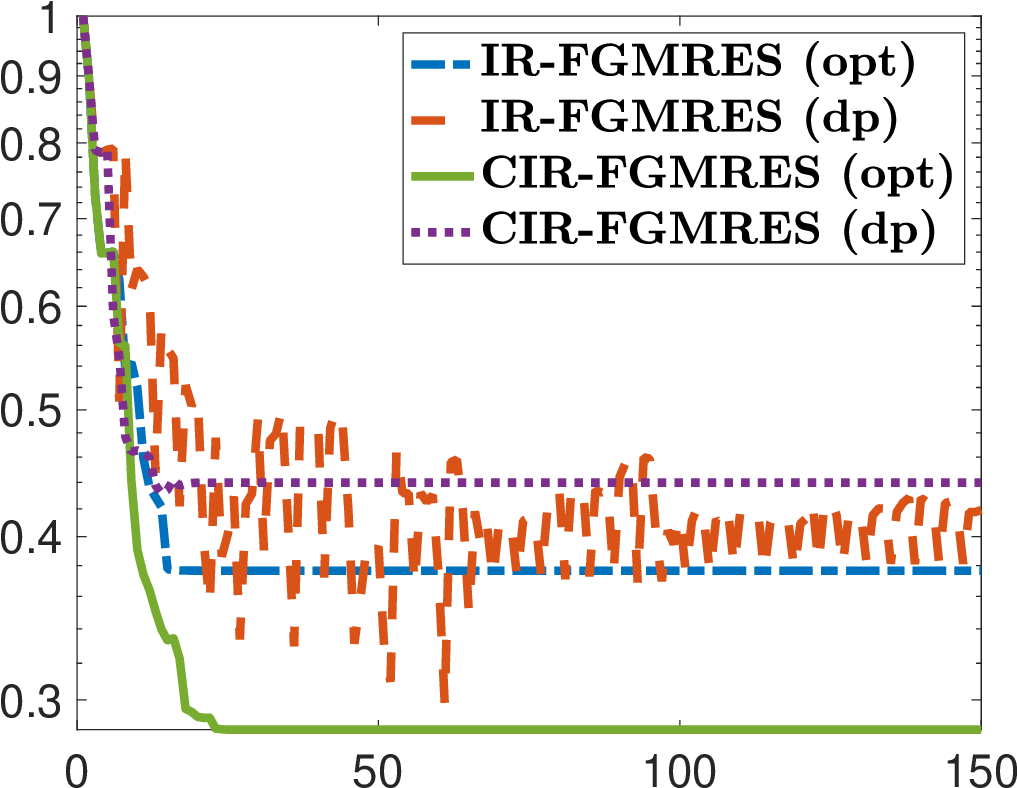}
        \caption*{Difference $\lambda_k$ choices}
    \end{subfigure}
    \begin{subfigure}[b]{0.32\textwidth}
        \centering
        \includegraphics[width=0.95\textwidth]{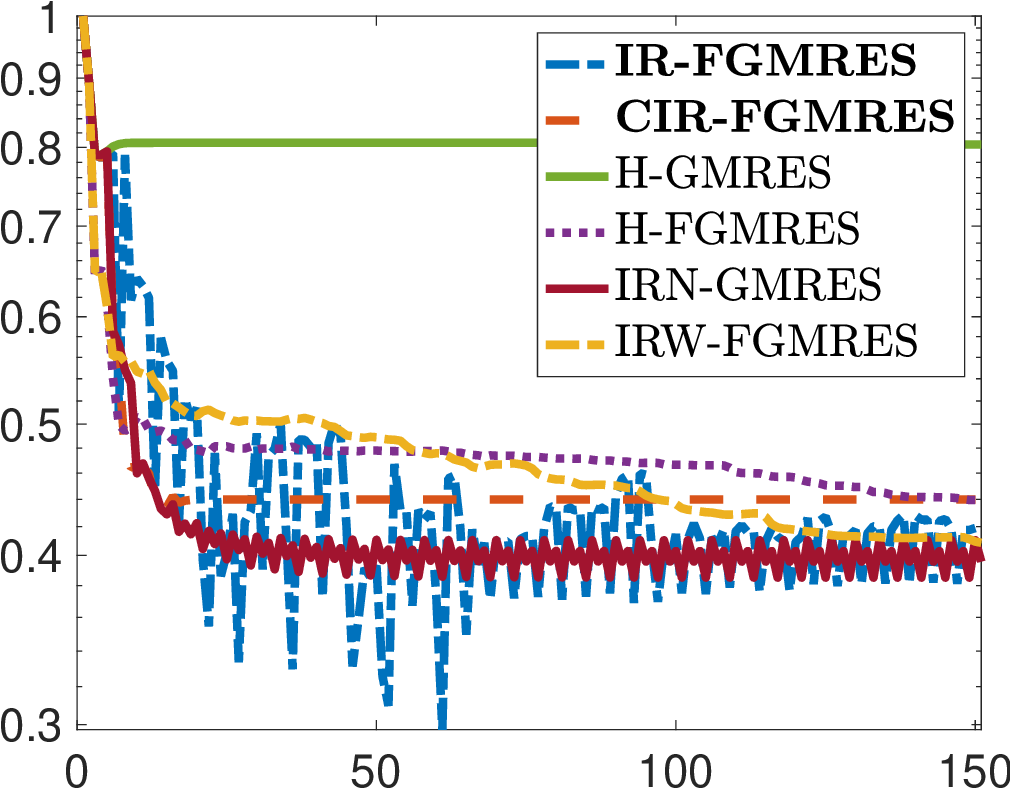}
        \caption*{Other (flexible) Krylov methods}
    \end{subfigure}
    \begin{subfigure}[b]{0.32\textwidth}
        \centering
        \includegraphics[width=0.95\textwidth]{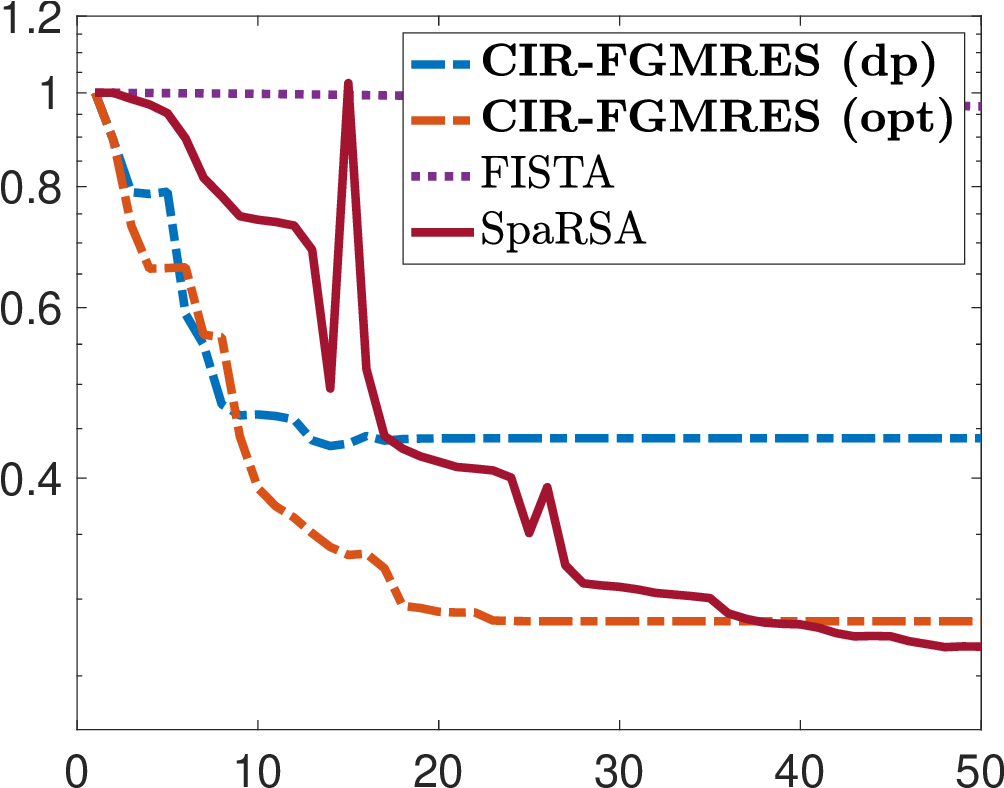}
        \caption*{Other $\ell_1$ solvers}
    \end{subfigure}
    \caption{Different error norm histories for the test problem star cluster.}\label{Ex2_performance}
\end{figure}

Last, in Figure \ref{Ex2_reconstructions}, we show some reconstructions of the solution. As one can expect, the constructions look sparse (except H-GMRES, which does not impose sparsity).

\begin{figure}[ht!]
    \centering
    \begin{subfigure}[b]{0.32\textwidth}
        \centering
        \includegraphics[width=0.95\textwidth]{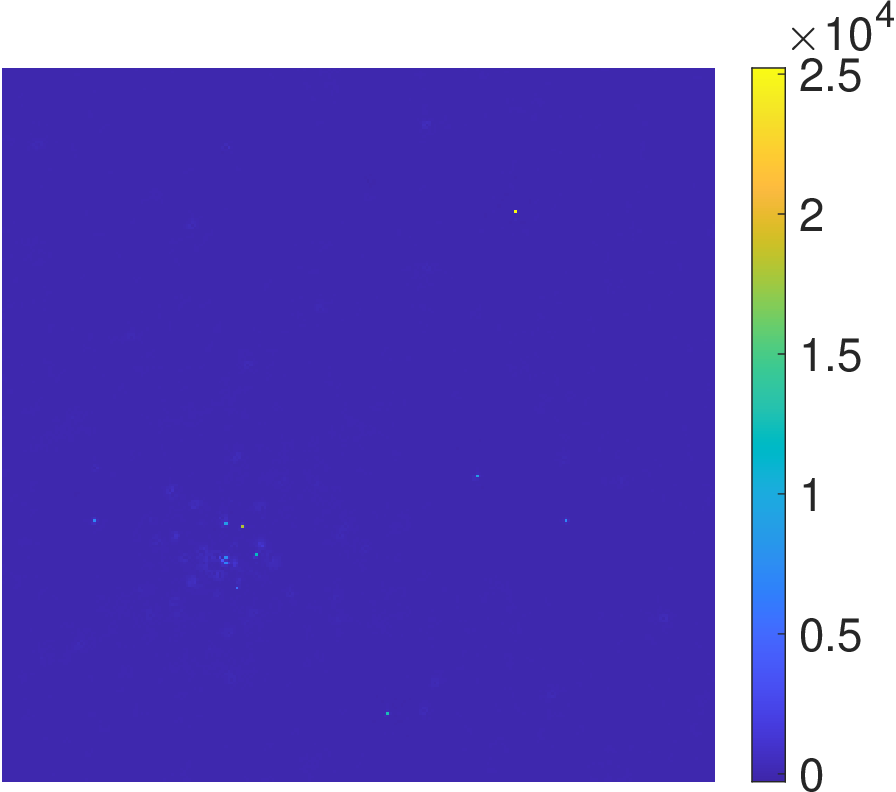}
        \caption*{\bf IR-FGMRES \phantom{\quad \quad}}
    \end{subfigure}
    \begin{subfigure}[b]{0.32\textwidth}
        \centering
        \includegraphics[width=0.95\textwidth]{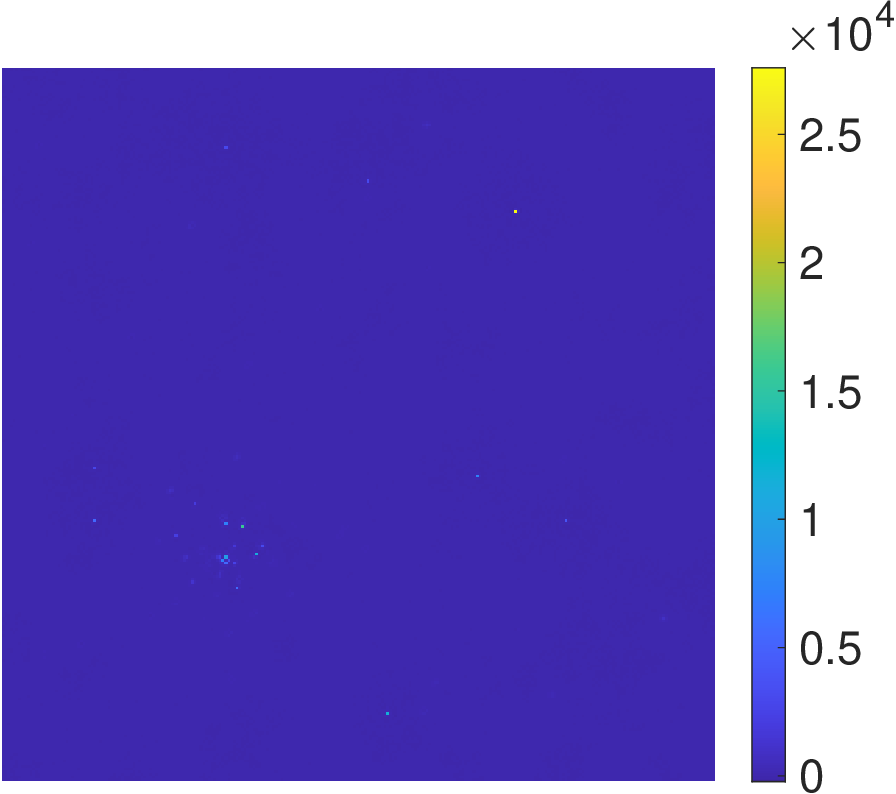}
        \caption*{\bf CIR-FGMRES \phantom{\quad \quad}}
    \end{subfigure}
    \begin{subfigure}[b]{0.32\textwidth}
        \centering
        \includegraphics[width=0.95\textwidth]{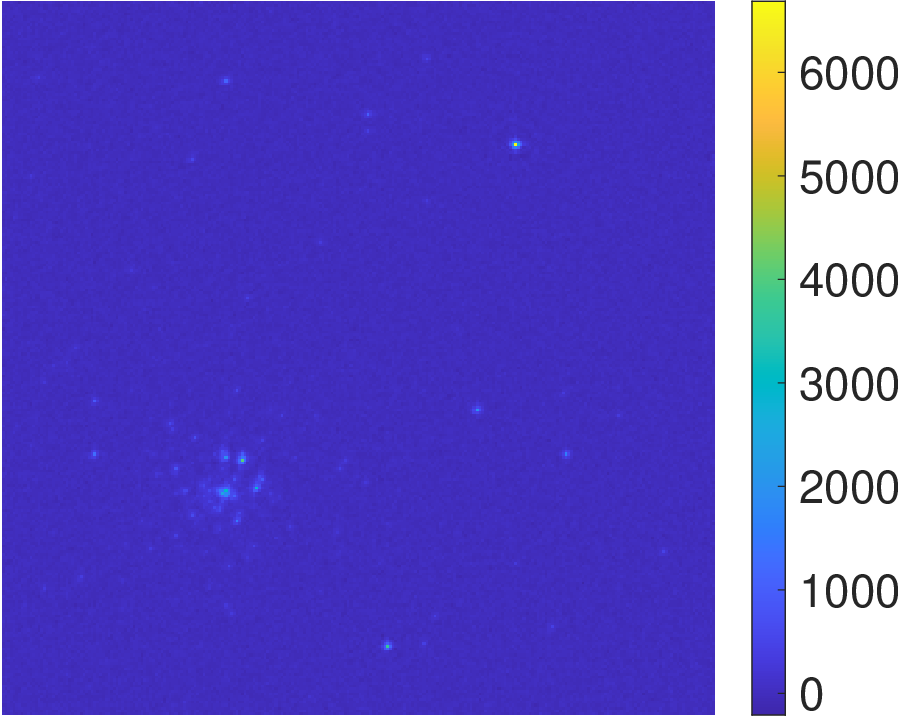}
        \caption*{H-GMRES \phantom{\quad \quad}}
    \end{subfigure}
        \begin{subfigure}[b]{0.32\textwidth}
        \centering
        \includegraphics[width=0.95\textwidth]{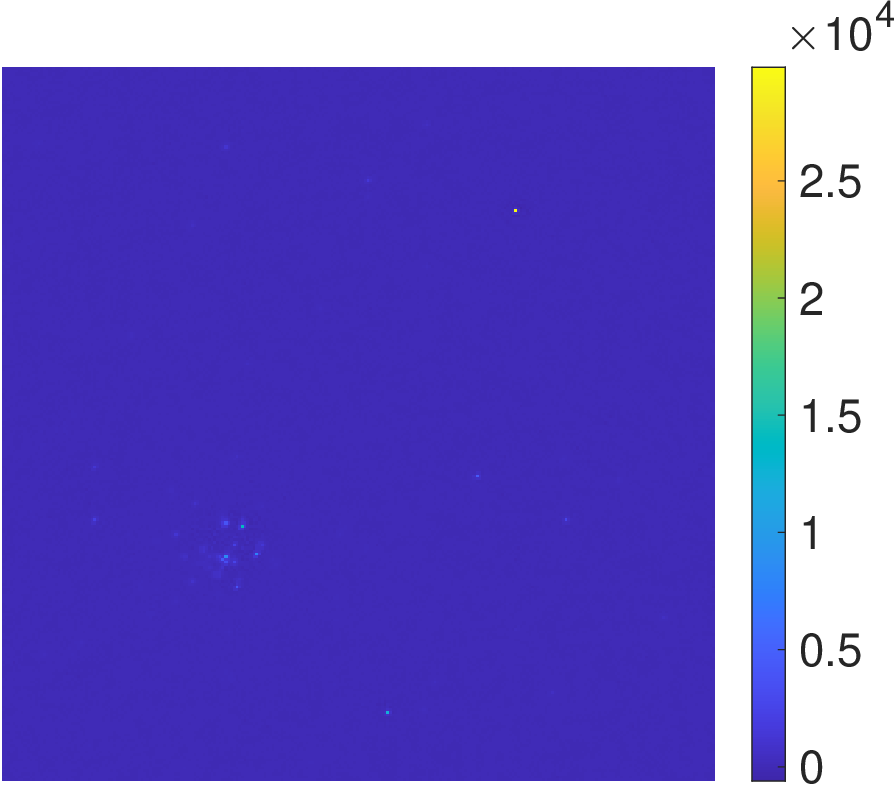}
        \caption*{H-FGMRES \phantom{\quad \quad}}
    \end{subfigure}
    \begin{subfigure}[b]{0.32\textwidth}
        \centering
        \includegraphics[width=0.95\textwidth]{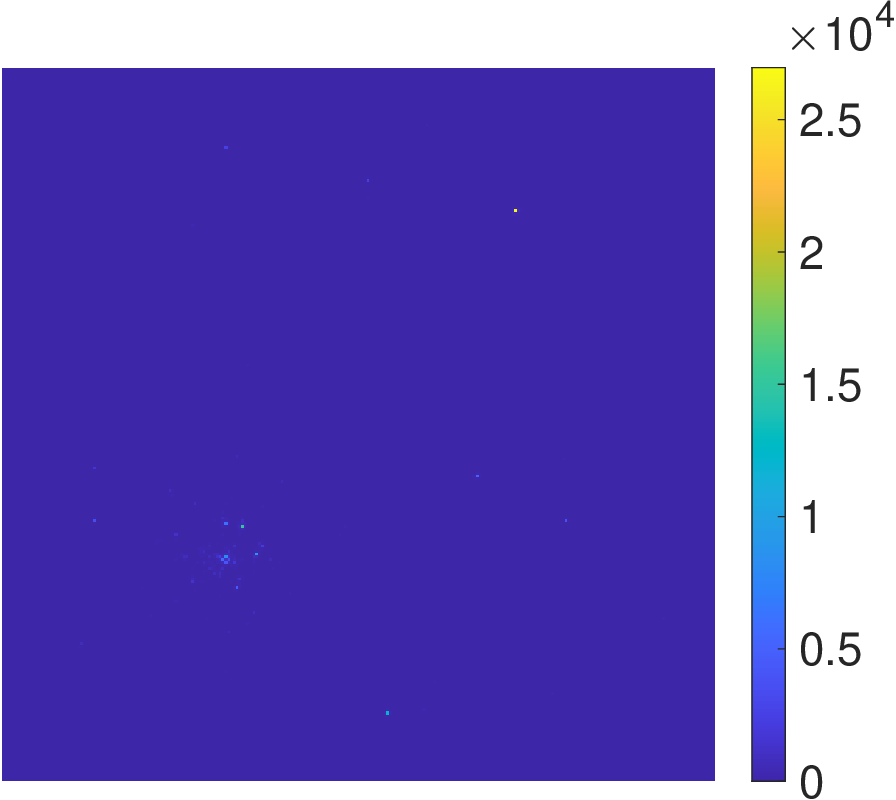}
        \caption*{IRN-GMRES \phantom{\quad \quad}}
    \end{subfigure}
    \begin{subfigure}[b]{0.32\textwidth}
        \centering
        \includegraphics[width=0.95\textwidth]{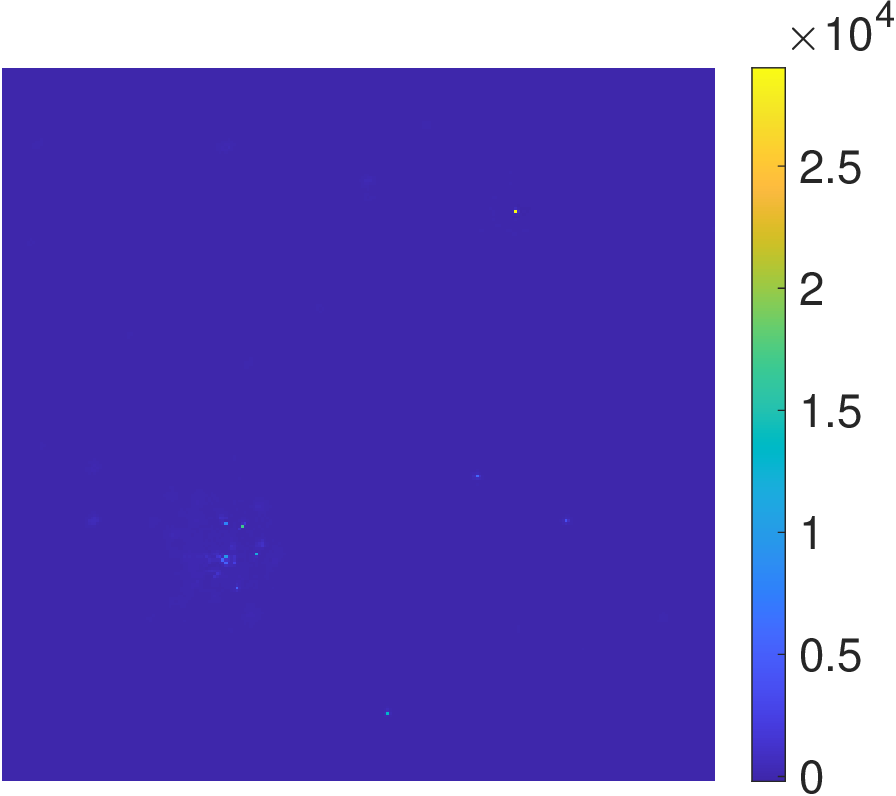}
        \caption*{IRW-FGMRES \phantom{\quad \quad}}
    \end{subfigure}
    \caption{Reconstructions for the test problem star cluster.}
    \label{Ex2_reconstructions}
\end{figure}

\subsection{Oversampled CT problem with high noise level}
The last example corresponds to an oversampled computed tomography (CT) problem with a high level of noise of $50\%$. This type of scenario might occur, for example, when doing fast measurements with low energy to reduce the scanning time and radiation dose, such as the case of chest pathologies and lung cancer screening \cite{Takker2021lowdose,Pinsky2018lowdose}. In this example, we use the Shepp-Logan phantom of size $128\times128$ as the true image and simulate the measuring process $\bfA \in \mathbb{R}^{78192 \times 65536}$ using the function PRtomo in IRtools \cite{IRtools}, modifying the default options so that we measure at 216 equispaced angles between 0 and 179. Both the true image and the noisy sinogram can be observed in Figure \ref{Ex3_setting}.

\begin{figure}[ht!]
    \centering
    \begin{subfigure}[b]{0.47\textwidth}
        \centering
        \includegraphics[width=0.7\textwidth]{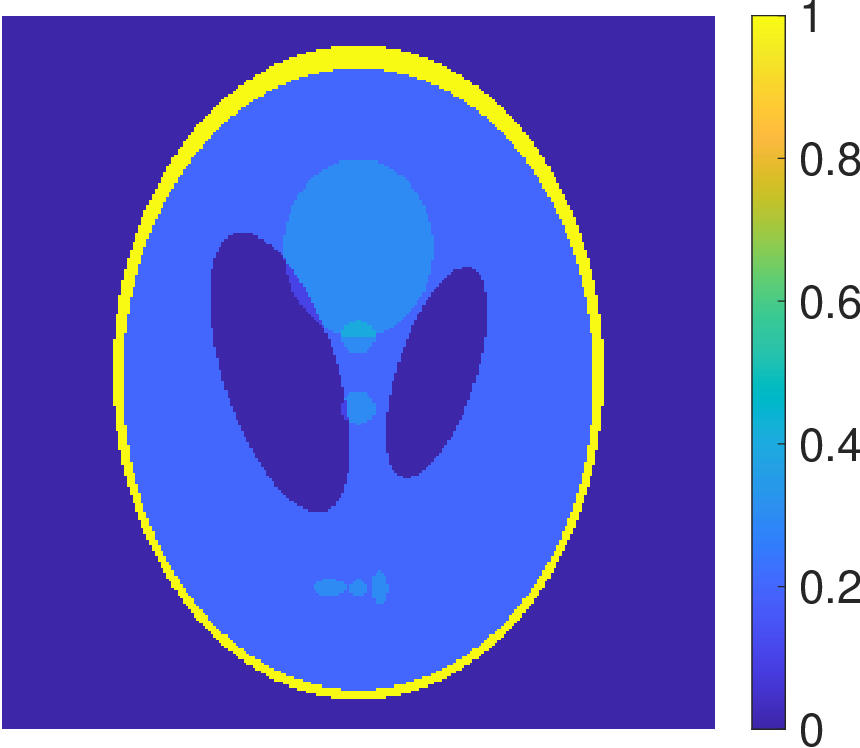}
        \caption*{True solution: $\mathbf{x_{\textbf{true}}}$}
    \end{subfigure}
    \begin{subfigure}[b]{0.47\textwidth}
        \centering
        \includegraphics[width=0.7\textwidth]{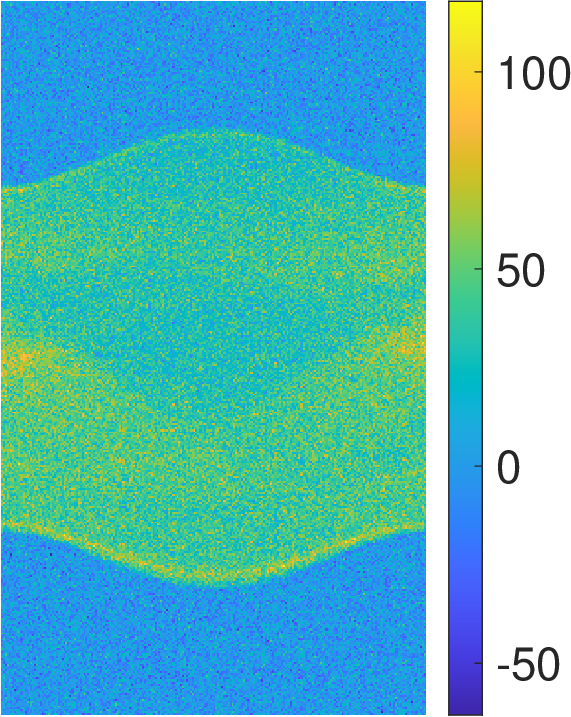}
        \caption*{Noisy measurements: $\mathbf{b}$}
    \end{subfigure}
    \caption{Oversampled highly noisy Shepp-Logan phantom CT problem.}\label{Ex3_setting}
\end{figure}

Similarly to the previous examples, we show different relative error norm histories in Figure \ref{Ex3_error_norms} to showcase the performance of the new methods. Note that the system matrix for this example is not square, so, when comparing Krylov subspace methods, we only compare use those based on the (possibly flexible) Golub-Kahan process. 
In the first panel of Figure \ref{Ex3_error_norms}, one can observe that the discrepancy principle is a very good regularization parameter choice for both new methods: IR-FLSQR and CIR-FLSQR. 

In the second and third panels of Figure \ref{Ex3_error_norms}, one can see the superior performance of the new methods with respect to all the compared methods. Note that all methods that allow restarts (of the solution subspace) use the stabilization of the regularization parameter to trigger the restarts; or when the \revision{number of basis vectors reaches $20$.} \st{number basis reaches 20 vectors.} Therefore, these methods (the new IR-LSQR, CIR-FLSQR and the standard IRN using LSQR as the inner solver) require a limited amount of memory compared to the other Krylov-based methods in the panel 2 of Figure \ref{Ex3_error_norms}. Moreover, recall that all compared methods in the second panel also use the discrepancy principle to automatically select a good regularization parameter at each iteration. However, \revision{the methods in the last panel} required a given regularization parameters as previously described; so we use the one computed by CIR-FLSQR in the last iteration.

\begin{figure}[ht!]
    \centering
    \begin{subfigure}[b]{0.32\textwidth}
        \centering
        \includegraphics[width=0.95\textwidth]{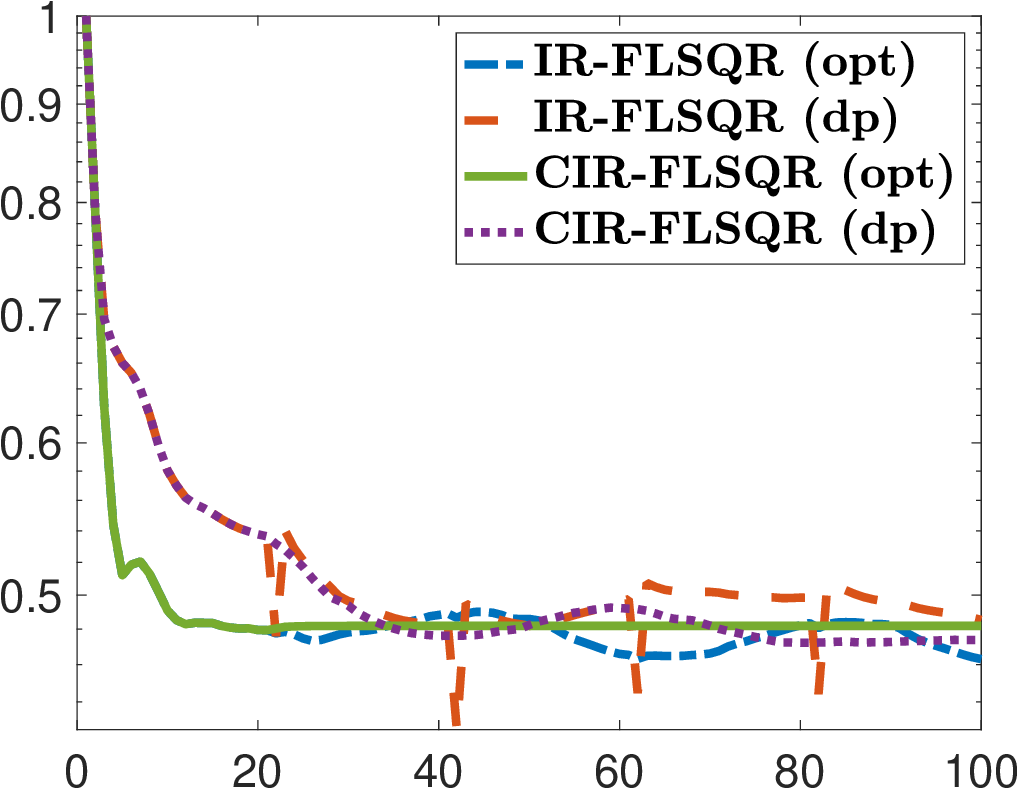}
        \caption*{Different $\lambda_k$ choices}
    \end{subfigure}
    \begin{subfigure}[b]{0.32\textwidth}
        \centering
        \includegraphics[width=0.95\textwidth]{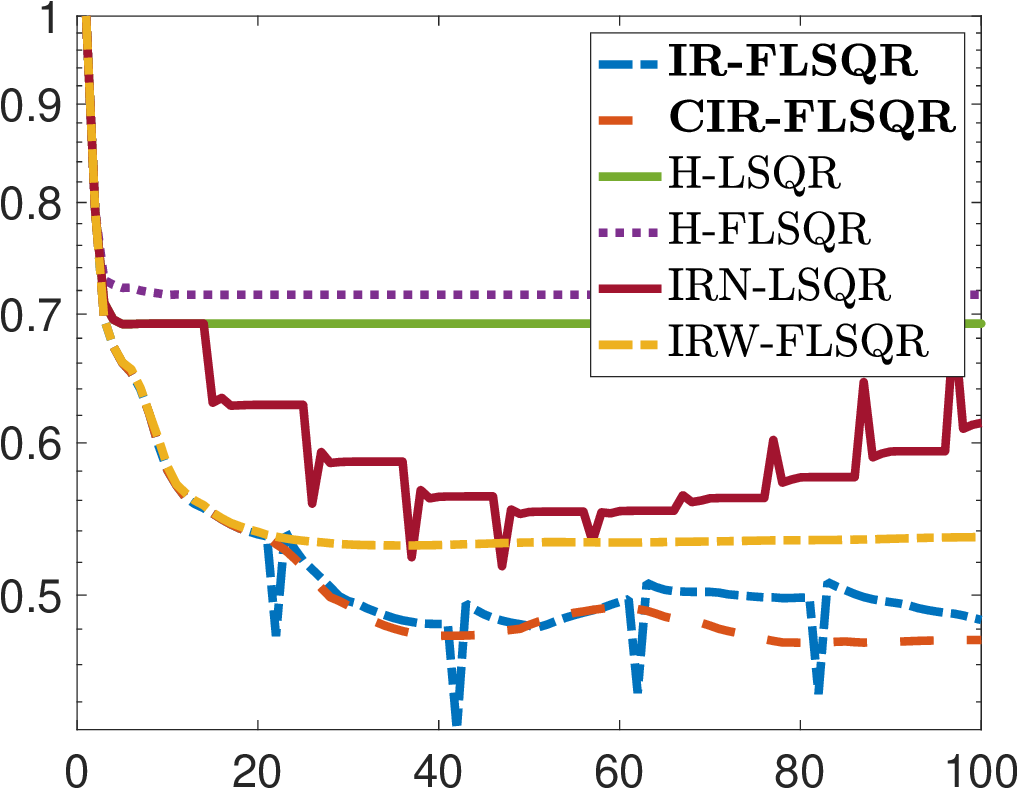}
        \caption*{Other (flexible) Krylov methods}
    \end{subfigure}
    \begin{subfigure}[b]{0.32\textwidth}
        \centering
        \includegraphics[width=0.95\textwidth]{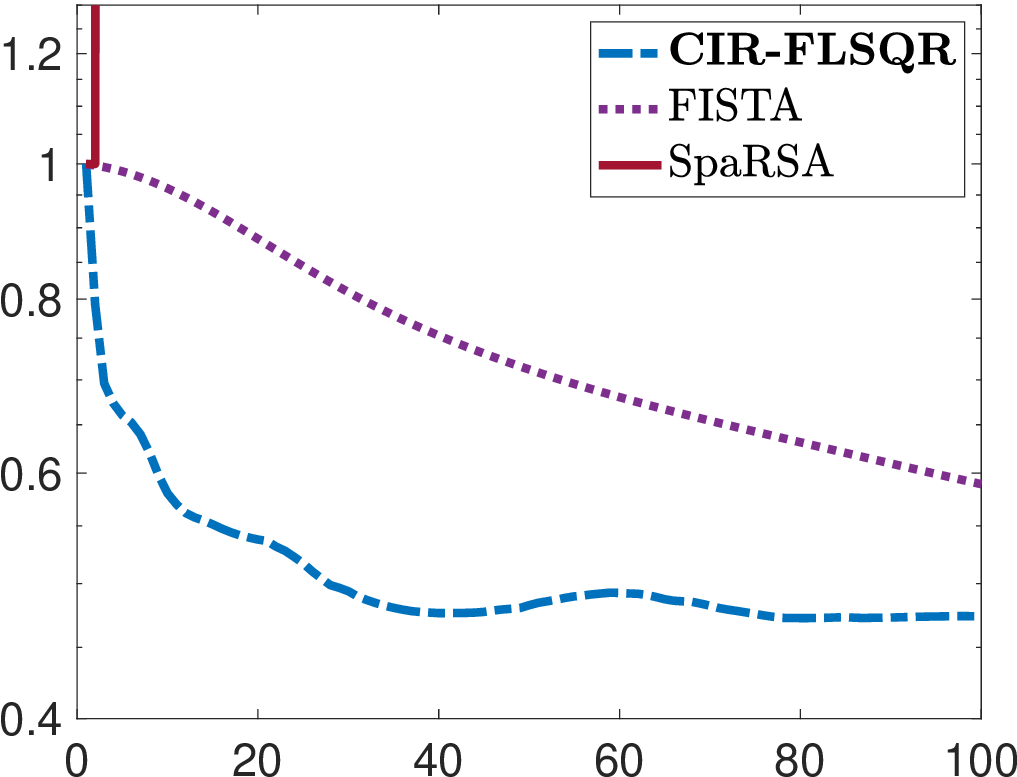}
        \caption*{Other $\ell_1$ solvers}
    \end{subfigure}
    \caption{Different error norm histories for the Shepp-Logan phantom CT problem.}\label{Ex3_error_norms}
\end{figure}

Lastly, in Figure \ref{Ex3_reconstructions}, we can see the reconstructions provided by the new method as well as the other compared (Krylov) solvers. In this figure, one can the excellent performance of these methods; particularly of CIR-FLSQR. Moreover, we recall that the new methods are very efficient in terms of memory by employing suitable restarts, which makes them applicable in real CT setting (unlike methods such as IRW-FLSQR or H-FLSQR).

\begin{figure}[ht!]
    \centering
    \begin{subfigure}[b]{0.32\textwidth}
        \centering
        \includegraphics[width=0.95\textwidth]{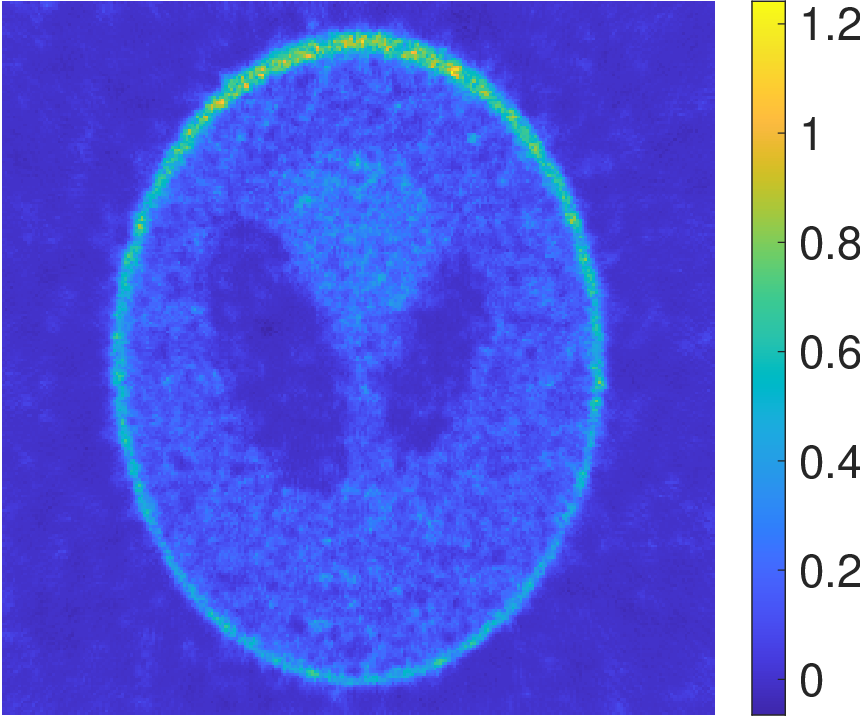}
        \caption*{\bf IR-FLSQR}
    \end{subfigure}
    \begin{subfigure}[b]{0.32\textwidth}
        \centering
        \includegraphics[width=0.95\textwidth]{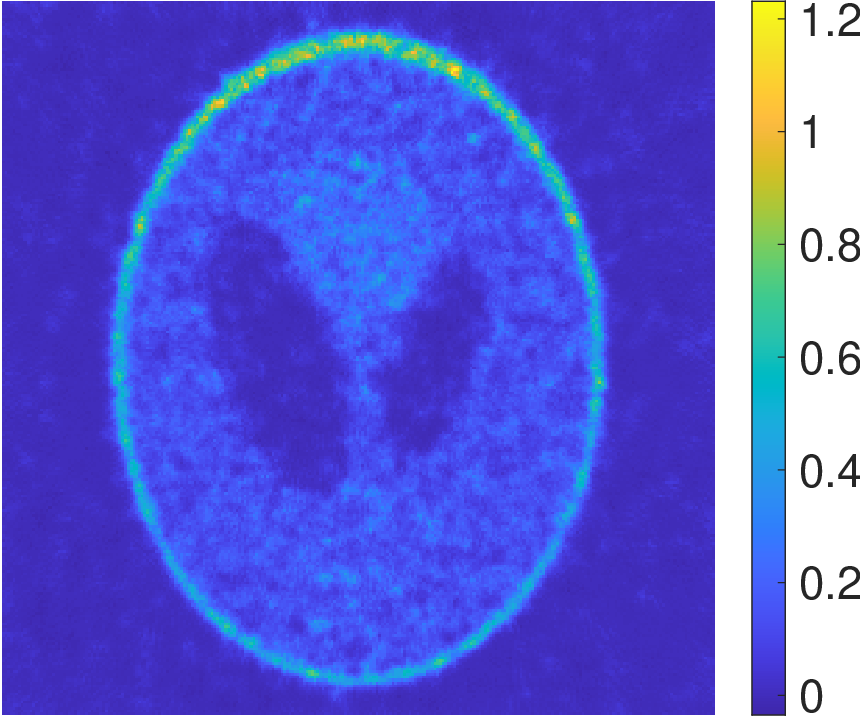}
        \caption*{\bf CIR-FLSQR}
    \end{subfigure}
    \begin{subfigure}[b]{0.32\textwidth}
        \centering
        \includegraphics[width=0.95\textwidth]{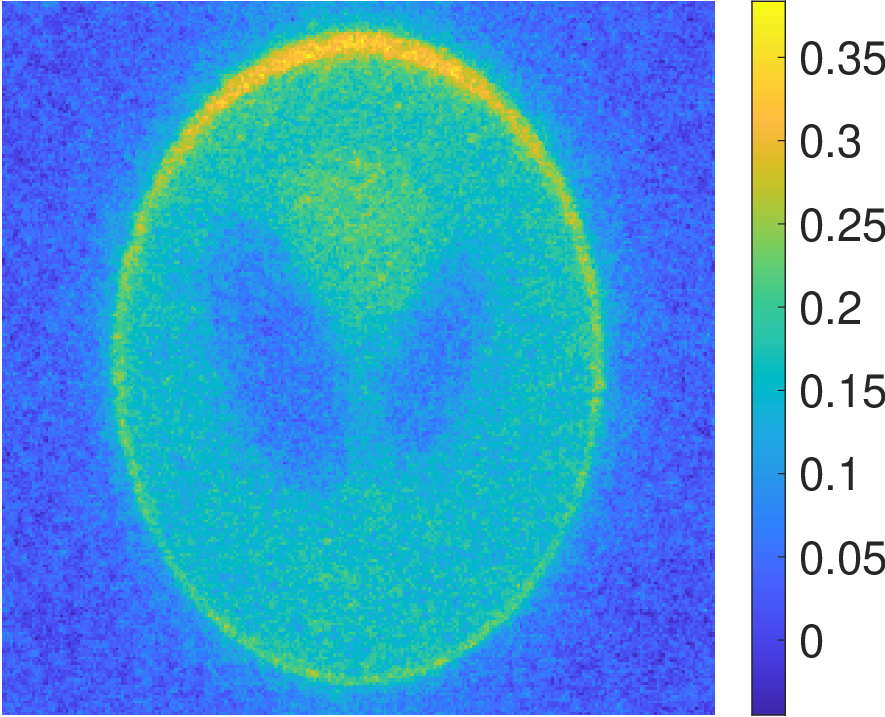}
        \caption*{H-LSQR}
    \end{subfigure}
        \begin{subfigure}[b]{0.32\textwidth}
        \centering
        \includegraphics[width=0.95\textwidth]{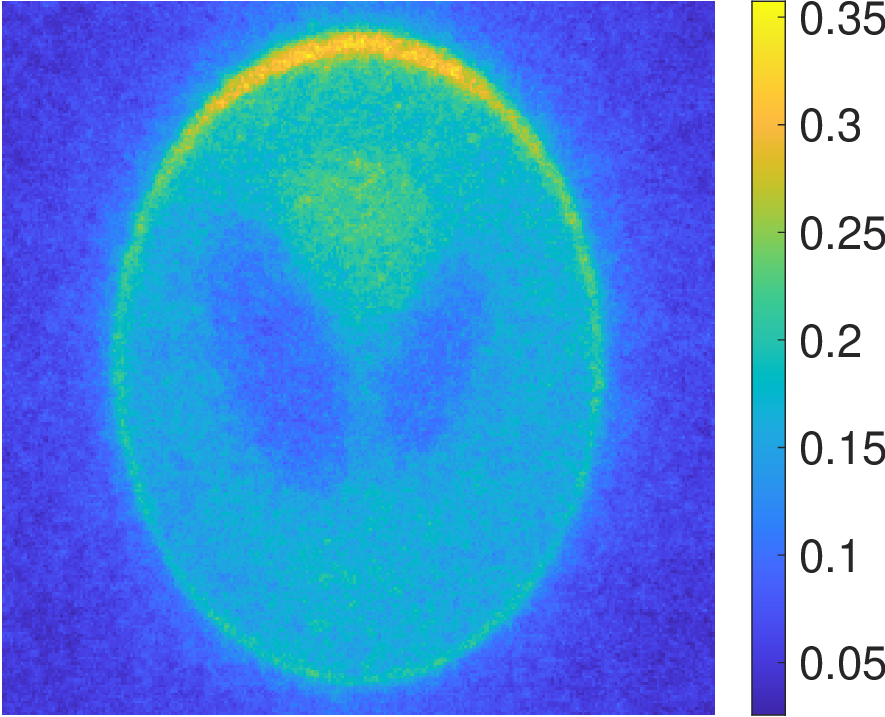}
        \caption*{H-FLSQR}
    \end{subfigure}
    \begin{subfigure}[b]{0.32\textwidth}
        \centering
        \includegraphics[width=0.95\textwidth]{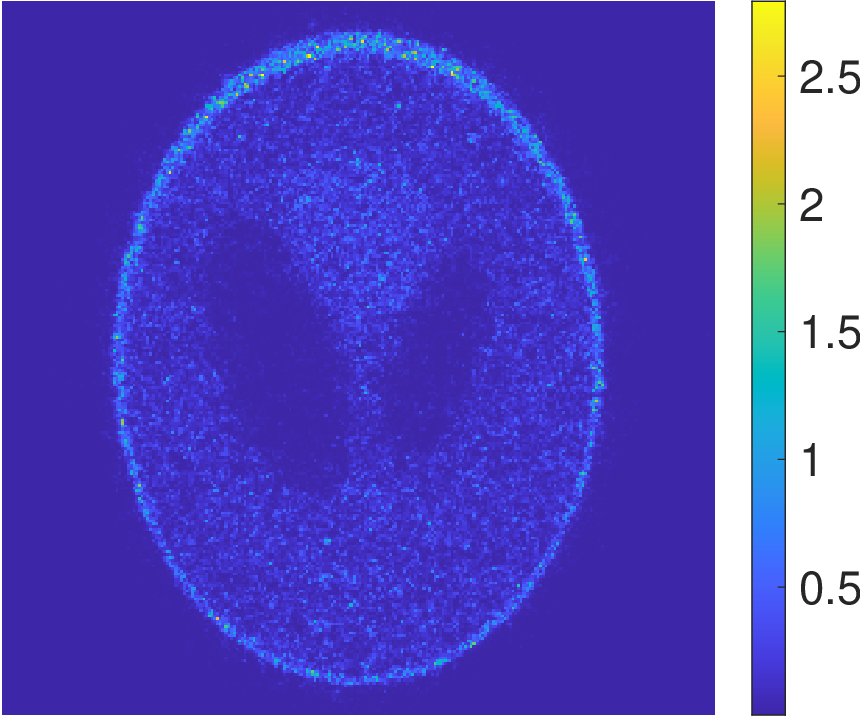}
        \caption*{IRN-LSQR}
    \end{subfigure}
    \begin{subfigure}[b]{0.32\textwidth}
        \centering
        \includegraphics[width=0.95\textwidth]{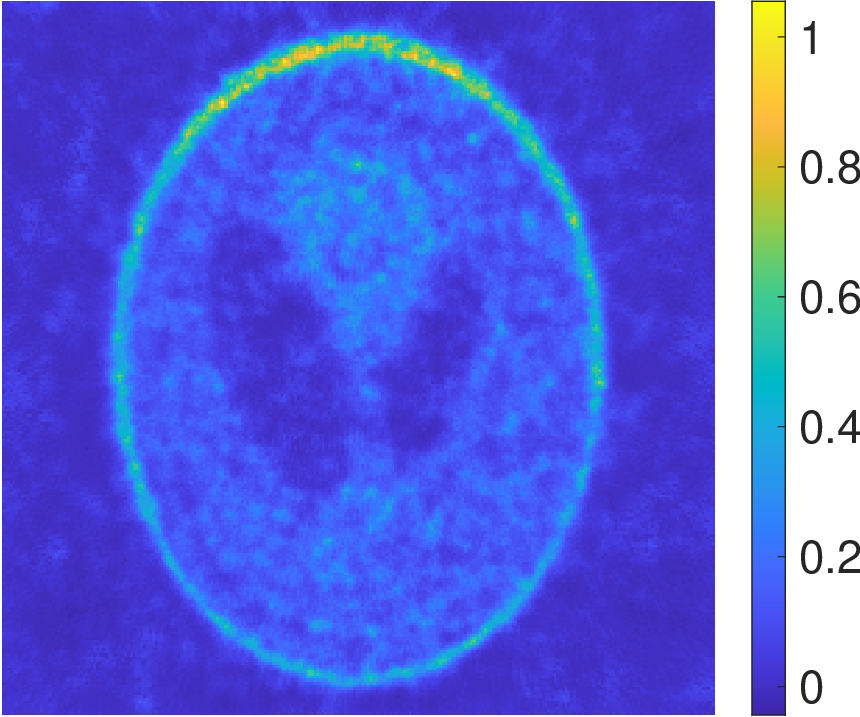}
        \caption*{IRW-FLSQR}
    \end{subfigure}
    \caption{Reconstructions for the highly noisy oversampled CT test problem using the Shepp-Logan phantom.}
    \label{Ex3_reconstructions}
\end{figure}

%%%%%%%%%%%%%%%%%%%%%%%%%%%%%%%%%%%%%%%%%%%%%%%%%%%%%%%%%%%%%%%%%%
%%%%%%%%%%%%%%%%%%%%%%% Section Break %%%%%%%%%%%%%%%%%%%%%%%%%%%%
%%%%%%%%%%%%%%%%%%%%%%%%%%%%%%%%%%%%%%%%%%%%%%%%%%%%%%%%%%%%%%%%%%
\section{Conclusion}\label{sec:conclusion}
In this paper, we presented four new algorithms for linear inverse problems with sparse solutions: IR-FGMRES; CIR-FGMRES, IR-FLSQR and CIR-FLSQR. \revision{To do so, we} \st{First, we} proposed a new algorithmic framework for IRN methods, which is based on an iterative refinement interpretation. This setting naturally allows for a restarted scheme \revision{which allows for the automatic selection of the regularization parameters and requires limited memory, making them much more suitable to large-scale problems. We also considered convergence properties of the proposed algorithms and considered several numerical experiments that highlight where our new methods show competitiveness with respect to other standard solvers; particularly in scenarios with high levels of noise.}

%%%%%%%%%%%%%%%%%%%%%%%%%%%%%%%%%%%%%%%%%%%%%%%%%%%%%%%%%%%%%%%%%%

\section*{Funding}
LO acknowledges partial support by the U.S. National Science Foundation grants DMS-2038118 and DMS-2208294. MSL acknowledges partial support from the NSF grant DMS-2208294.
%%%%%%%%%%%%%%%%%%%%%%% Section Break %%%%%%%%%%%%%%%%%%%%%%%%%%%%
%%%%%%%%%%%%%%%%%%%%%%%%%%%%%%%%%%%%%%%%%%%%%%%%%%%%%%%%%%%%%%%%%%

\bibliographystyle{alpha}
\bibliography{sample}

\end{document}